\newcommand{\sbullet}{%
  \hbox{\fontfamily{lmr}\fontsize{.25\dimexpr(\f@size pt)}{0}\selectfont\textbullet}}
\DeclareRobustCommand{\mathbullet}{\accentset{\sbullet}}
\author{H. Egger and M. Sabouri}
\address{Department of Mathematics, TU Darmstadt, Germany}
\email{egger@mathematik.tu-darmstadt.de}
\email{sabouri@mathematik.tu-darmstadt.de}
\title[High order approximation for poroelasticity]{On the structure preserving high-order\\ approximation of quasistatic poroelasticity}
\newtheorem{theorem}{Theorem}
\newtheorem{lemma}[theorem]{Lemma}
\newtheorem{assumption}[theorem]{Assumption}
\newtheorem{problem}[theorem]{Problem}
\theoremstyle{definition}
\newtheorem{remark}[theorem]{Remark}
\def\dt{\partial_t}
\def\eps{\epsilon}
\def\div{\textrm{div}}
\def\RR{\mathbb{R}}
\def\V{\mathcal{V}}
\def\Q{\mathcal{Q}}
\def\Th{\mathcal{T}_h}
\def\rmA{\mathtt{A}}
\def\rmB{\mathtt{B}}
\def\rmK{\mathtt{K}}
\def\rmS{\mathtt{S}}
\def\rmu{\mathtt{u}}
\def\rmc{\mathtt{c}}
\def\rmp{\mathtt{p}}
\def\rmf{\mathtt{f}}
\def\rmg{\mathtt{g}}
\begin{document}

\begin{abstract}
We consider the systematic numerical approximation of Biot's quasistatic model for the consolidation of a poroelastic medium. Various discretization schemes have been analysed for this problem and inf-sup stable finite elements have been found suitable to avoid spurios pressure oscillations in the initial phase of the evolution. 
In this paper, we first clarify the role of the inf-sup condition for the well-posedness of the continuous problem and discuss the choice of appropriate initial conditions. 
We then develop an abstract error analysis that allows us to analyse some approximation schemes discussed in the literature in a unified manner. 
In addition, we propose and analyse the high-order time discretization by a scheme that can be interpreted as a variant of continuous-Galerkin or particular Runge-Kutta methods applied to a modified system. The scheme is designed to preserve both, the underlying differential-algebraic structure and energy-dissipation property of the problem.
In summary, we obtain high-order Galerkin approximations with respect to space and time and derive order-optimal convergence rates.  
The numerical analysis is carried out in detail for the discretization of the two-field formulation by Taylor-Hood elements and a variant of a Runge-Kutta time discretization. Our arguments can however be extended to three- and four field formulations and other time discretization strategies. 
\end{abstract}

\maketitle

\noindent {\bf Keywords}. 
Galerkin  approximation, mixed finite elements, structure preserving discretization, differential-algbraic equations, Biot system, poroelasticity\\

\section{Introduction}

In linear quasistatic theory \cite{Biot41} the consolidation of a poroelastic solid, which is fully saturated by an incompressible fluid, is usually described by Biot's equations 
\begin{align}
-\div (2 \mu \eps(u) + \lambda \div(u) I) + \alpha \nabla p  &=  f, \label{eq:biot1}\\
\alpha \div (\mathbullet u) - \div(\kappa \nabla p) &= g, \label{eq:biot2}
\end{align}
together with appropriate initial and boundary conditions.
Here $u$ is the solid displacement of the porous medium and $p$ is the pressure of the residing fluid whereas $f$ denotes the density of external forces and $g$ is the fluid source density. 
The Biot parameter $\alpha$ is usually close to one and the hydraulic conductivity $\kappa$ is assumed strictly positive. 

Due to its many applications, e.g., in geosciences or biomathematics, the theoretical and numerical analysis of \eqref{eq:biot1}--\eqref{eq:biot2} has attracted significant interest in the mathematical literature. 
The existence of unique solutions for the Biot system has been established by Zenisek \cite{Zenisek84a,Zenisek84} under some regularity conditions on the data via discretization with finite elements and the implicit Euler method using a-priori estimates and compactness arguments. 
Showalter \cite{Showalter00} established well-posedness for different formulations of the Biot model by the method of semi-groups. 
Murad and Loula \cite{MuradLoula92,MuradLoula94} investigated the Galerkin approximation by stable and unstable finite element pairs and established decay estimates for the discrete error via improved energy estimates.
Mixed finite element approximations of the three-field formulation in which the seapage velocity $v=-\kappa \nabla p$ is introduced as a new variable,  were considered by Phillips and Wheeler \cite{PhillipsWheeler07a,PhillipsWheeler07b,PhillipsWheeler08}. 
Yi \cite{Yi13} investigated the discretization of a four-field formulation in which also the elastic stress field $\sigma = 2 \mu \eps(u) + \lambda \div(u) I$ is introduced additionally.
In \cite{KanschatRiviere18} Kanschat and Riviere considered the approximation of the three-field formulation with a non-conforming approximation of the elastic deformation by $H(\div)$ finite elements. 
While most of the previous papers only utilized low order approximations in time, Bause et al. \cite{Bause17,Bause18p} considered the efficient implementation of high order time approximations for poroelasticity, but no convergence analysis was conducted. 
Various available results concern the discretization of the static Biot systems that arise after time discretization, see e.g., \cite{HongKraus18,OyarzuaRuizBaier16} in which stability with respect to model and discretization parameters has been studied.
It is well-accepted by now \cite{MuradLoula94,PhillipsWheeler08,Yi17}, that inf-sup stable finite element pairs should be used to avoid spurious pressure oscillations that might appear in the initial phase of the simulations.

\medskip 

In this paper, we consider the systematic construction of high-order approximations for the quasistatic system \eqref{eq:biot1}--\eqref{eq:biot2} by Galerkin methods in space and time. 
We give a short and concise proof of well-posedness of the continuous problem which motivates our functional analytic setting and provides guidelines for the choice of initial conditions. The regularity requirements for the data are based on the physically relevant energy-dissipation structure of the evolution problem and compatibility conditions for the initial values are derived from the differential-algebraic structure.    
We then consider the Galerkin approximation in space and discuss the properties of the resulting differential-algebraic equations. In particular, we show that the index of the differential-algebraic system \cite{BrenanCampbellPetzold,KunkelMehrmann} is one, independent of the approximation spaces, while a discrete inf-sup condition, i.e., the surjectivity of the discrete divergence operator, is required to guarantee the well-posedness of the semi-discretization under natural compatibility conditions for the initial conditions. Otherwise, additional non-physical constraints for the initial conditions arise.
Afterwards, we  show that the discrete error, i.e., the difference between the Galerkin semi-discretization and an appropriate elliptic projection, only depends on the approximation error in the displacement $u$ which implies a certain super-convergence for the discrete error. 
We then consider the time-discretization by a strategy which is capable of preserving both, the particular differential-algebraic structure and the energy-dissipation property of the underlying problem.
The resulting scheme can be interpreted as a continuous Galerkin method or a variant of certain Runge-Kutta methods applied to a reformulation of the problem in which the algebraic equation is differentiated. The latter interpretation also allows us for an efficient implementation. 

\medskip 

The remainder of the manuscript is organized as follows: 
In Section~\ref{sec:wellposed}, we introduce an abstract evolution problem which covers the weak formulation of the Biot system \eqref{eq:biot1}--\eqref{eq:biot2} as a special case, and we establish its well-posedness under mild assumptions on the problem structure and the data. A natural compatibility condition for the initial conditions and the underlying energy-dissipation structure are presented together with the resulting a-priori bounds. 
In Section~\ref{sec:galerkin}, we discuss the Galerkin approximation of the abstract problem and investigate the effect of the discrete inf-sup stability on the index of the resulting differential-algebraic system. We then establish an abstract convergence result for inf-sup stable approximations. 
In Section~\ref{sec:time}, we propose a time discretization scheme which can be interpreted as an inexact continuous Galerkin approximation or a variant of certain Runge-Kutta collocation methods.
In the spirit of \cite{Akrivis11}, this method can be interpreted in a pointwise sense which allows us to show that it preserves the underlying energy-dissipation structure and to conduct the error analysis with similar arguments as on the continuous level. 
In Section~\ref{sec:biot}, we apply the results to the discretization of the Biot system by Taylor-Hood finite elements and derive order optimal error estimates in space and time. 
Numerical tests are presented in Section~\ref{sec:num} for illustration of our theoretical results, 
and some directions for possible extensions and further investigations are discussed in the last section.

\section{An abstract model problem} \label{sec:wellposed}

We now introduce an abstract model problem which covers the Biot system as a special case, and then establish its well-posedness under mild structural assumptions on the data. 
In addition, we briefly discuss the regularity of the solution as well as the underlying energy-dissipation identity which us later used to establish a-priori estimates.

\subsection{Notation and summary of results}

Let $\V$, $\Q$, $\Q_0$ be real Hilbert spaces with compact embedding $\Q \subset \Q_0$. By identifying $\Q_0$ with its dual $\Q_0^*$, we obtain a Gelfand-triple $\Q \subset \Q_0 = \Q_0^* \subset \Q^*$; see \cite[Chapter~XVIII]{DL5}. The symbol $\langle \cdot , \cdot \rangle$ will be used to denote the duality product on $\V^* \times \V$, $\Q^* \times \Q$, and $\Q_0^* \times \Q_0$.

In the following, we consider an abstract evolution problem stated in weak form as
\begin{align}
a(u(t),v) - b(v,p(t)) &= \langle f(t), v\rangle, \qquad \forall v \in \V, \ t>0, \label{eq:var1}\\
b(\mathbullet u(t), q) + k(p(t),q) &= \langle g(t), q\rangle, \qquad \forall q \in \Q, \ t>0. \label{eq:var2} 
\end{align}
Here $a : \V \times \V \to \RR$, $k : \Q \times \Q \to \RR$, and $b : \V \times \Q_0 \to \RR$ 
are given bilinear forms and $f$, $g$ are functions of time with values in $\V^*$ and $\Q^*$, respectively. To establish the well-posedness for the corresponding Cauchy problem, we make the following assumptions.
\begin{assumption} \label{ass:main}
\begin{itemize} \itemindent-0em
 \item[(A1)] $a: \V \times \V \to \RR$ is bounded, elliptic, and symmetric.
 \item[(A2)] $k: \Q \times \Q \to \RR$ is bounded, elliptic, and symmetric. 
 \item[(A3)] $b: \V \times \Q_0 \to \RR$ is bounded and inf-sup stable; see below.
\end{itemize}
\noindent
We further use $a(\cdot,\cdot)$ and $k(\cdot,\cdot)$ as scalar products on $\V$ and $\Q$, and thus obtain 
\begin{align}
a(u,v) \le \|u\|_\V \|v\|_\V \qquad \text{and} \qquad a(v,v) = \|v\|_\V^2,\\
k(p,q) \le \|p\|_\Q \|q\|_\Q \qquad \text{and} \qquad k(q,q) = \|q\|_\Q^2.
\end{align}
The norm of $\Q_0$ is chosen such that $\|q\|_{\Q_0} \le \|q\|_\Q$ and (A3) yields
\begin{align} 
b(v,q) \le C_b \|v\|_\V \|q\|_{\Q_0} \qquad \text{and} \qquad \sup_{\|v\|_\V=1} b(v,q) \ge \beta \|q\|_{\Q_0},
\end{align}
for some $\beta,C_b > 0$. All estimates hold uniformly with respect to their arguments.
\end{assumption}
The following theorem summarizes the basic results about well-posedness and regularity of the solution for the above system with appropriate initial conditions. 
\begin{theorem} \label{thm:wellposed}
Let Assumption~\ref{ass:main} hold. Then for any $f \in H^1(0,T;\V^*)$, $g \in L^2(0,T;\Q^*)$, and $p_0 \in \Q_0$, there exists a unique weak solution 
\begin{align*}
u \in C(0,T;\V) \quad \text{ with } \quad B u \in H^1(0,T;\Q^*), \\
p \in L^2(0,T;\Q) \cap H^1(0,T;\Q^*) \cap C(0,T;\Q_0),  
\end{align*}
which satisfies \eqref{eq:var1}--\eqref{eq:var2} for a.e. $0<t<T$ and the initial conditions 
$p(0)=p_0$, $u(0)=u_0$ where $u_0 \in \V$ is defined by the compatibility conditions
\begin{align} \label{eq:compatibility}
a(u_0,v) - b(v,p_0) &= \langle f(0),v\rangle, \quad \forall v \in \V. 
\end{align}
If, in addition, $g \in L^2(0,T;\Q_0^*) \cup H^1(0,T;\Q^*)$ and $p_0 \in \Q$,
then 
\begin{align*}
p \in L^\infty(0,T;\Q) \cap H^1(0,T;\Q_0) 
\qquad \text{and} \qquad 
u \in H^1(0,T;\V).
\end{align*}
In both cases, the solution can be bounded by the data in the natural norms.
\end{theorem}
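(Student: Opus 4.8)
The plan is to eliminate the displacement $u$ and reduce the coupled system to a standard parabolic evolution equation for the pressure $p$, to which the classical theory of Lions for abstract parabolic problems applies. To this end I introduce the operators $A:\V\to\V^*$, $K:\Q\to\Q^*$, and $B:\V\to\Q_0^*$ associated with the bilinear forms $a$, $k$, $b$, so that \eqref{eq:var1}--\eqref{eq:var2} read $Au-B^*p=f$ in $\V^*$ and $\ddt Bu+Kp=g$ in $\Q^*$. By (A1) the operator $A$ is boundedly invertible, whence \eqref{eq:var1} gives $u=A^{-1}(f+B^*p)$ pointwise in time. Inserting $Bu=BA^{-1}f+Sp$ with the Schur complement $S:=BA^{-1}B^*:\Q_0\to\Q_0^*$ into \eqref{eq:var2} yields the reduced problem
\begin{align*}
\langle S\dot p,q\rangle+k(p,q)=\langle \tilde g,q\rangle\quad\forall q\in\Q,\qquad \tilde g:=g-BA^{-1}\dot f .
\end{align*}
The key structural observation is that, by (A3) combined with (A1), the form $(p,q)_S:=\langle Sp,q\rangle=a(A^{-1}B^*p,A^{-1}B^*q)$ is a scalar product on $\Q_0$ equivalent to $\|\cdot\|_{\Q_0}^2$; indeed the inf-sup bound gives $(q,q)_S=\|A^{-1}B^*q\|_\V^2\ge \beta^2\|q\|_{\Q_0}^2$ and boundedness of $b$ gives the upper estimate. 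Hence $S$ may serve as the Riesz identification in the Gelfand triple $\Q\subset\Q_0\subset\Q^*$, and the reduced problem becomes a coercive parabolic equation in the triple $\Q\subset(\Q_0,(\cdot,\cdot)_S)\subset\Q^*$.

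Since $f\in H^1(0,T;\V^*)$ yields $BA^{-1}\dot f\in L^2(0,T;\Q_0^*)\subset L^2(0,T;\Q^*)$, the right-hand side satisfies $\tilde g\in L^2(0,T;\Q^*)$, and $p_0\in\Q_0$ is admissible initial data. I would then invoke the standard existence and uniqueness theorem for such equations \cite[Ch.~XVIII]{DL5} to obtain a unique $p\in L^2(0,T;\Q)\cap H^1(0,T;\Q^*)$ with $p(0)=p_0$, together with the embedding $p\in C(0,T;\Q_0)$. The displacement is recovered as $u=A^{-1}(f+B^*p)$; since $f\in C(0,T;\V^*)$ and $p\in C(0,T;\Q_0)$ this gives $u\in C(0,T;\V)$, the identity $u(0)=A^{-1}(f(0)+B^*p_0)$ is precisely the compatibility condition \eqref{eq:compatibility} defining $u_0$, and from $Bu=BA^{-1}f+Sp$ with $\ddt Bu=g-Kp\in L^2(0,T;\Q^*)$ one reads off $Bu\in H^1(0,T;\Q^*)$ and the validity of \eqref{eq:var2}. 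Uniqueness for the coupled system follows from uniqueness of $p$ and invertibility of $A$.

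For the a-priori bound in the first case I would not use $\dot u$ (which need not be integrable here) but test the reduced equation with $q=p$, giving $\tfrac12\ddt (p,p)_S+\|p\|_\Q^2=\langle\tilde g,p\rangle$ and, after a Young/Grönwall step, a bound on $p$ in $L^\infty(0,T;\Q_0)\cap L^2(0,T;\Q)$ and on $\dot p$ in $L^2(0,T;\Q^*)$ in terms of $\|p_0\|_{\Q_0}$, $\|g\|_{L^2(\Q^*)}$, and $\|f\|_{H^1(\V^*)}$; the bound for $u$ in $C(0,T;\V)$ then follows from \eqref{eq:var1}.

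For the improved regularity I would test the reduced equation with $q=\dot p$, which formally leads to $\|\dot p\|_S^2+\tfrac12\ddt\|p\|_\Q^2=\langle\tilde g,\dot p\rangle$. When $g\in L^2(0,T;\Q_0^*)$ the right-hand side is controlled directly by $\|\tilde g\|_{\Q_0^*}\|\dot p\|_{\Q_0}$ and Young's inequality, whereas for $g\in H^1(0,T;\Q^*)$ one first integrates that term by parts in time; together with $p_0\in\Q$ (so that $\|p(0)\|_\Q<\infty$) this yields $p\in L^\infty(0,T;\Q)\cap H^1(0,T;\Q_0)$. Then $u=A^{-1}(f+B^*p)\in H^1(0,T;\V)$, since $B^*:\Q_0\to\V^*$ is bounded and $f,p$ now lie in $H^1$ with values in $\V^*,\Q_0$. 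The main obstacle is the rigorous justification of testing with $\dot p$: since $\dot p$ is a priori only in $L^2(0,T;\Q^*)$, this step must be carried out on a Faedo--Galerkin approximation (or via a semigroup regularisation) and the resulting estimate transferred to the limit, which is also where the precise interplay between the two admissible assumptions on $g$ enters.
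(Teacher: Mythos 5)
Your proposal follows essentially the same route as the paper's proof: eliminate $u$ via $u=A^{-1}(f+B^*p)$, reduce to the Schur-complement parabolic problem with the $\Q_0$-elliptic form $c(p,q)=\langle BA^{-1}B^*p,q\rangle$, invoke the standard abstract parabolic theory of \cite{DL5} for existence, uniqueness and the basic a-priori bounds, recover $u$ and the compatibility condition \eqref{eq:compatibility} by continuity, and obtain the improved regularity by (formally) testing with $\mathbullet p$, treating the contribution of $g$ either directly in $\Q_0^*$ or by integration by parts in time. The only noteworthy deviation is in your favour: you justify the formal $\mathbullet p$-test via a Faedo--Galerkin approximation and need only $f\in H^1(0,T;\V^*)$ in the regularity step, whereas the paper's regularity argument momentarily assumes $f\in H^2(0,T;\V^*)$, an assumption your version shows to be unnecessary.
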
 

\noindent
The remainder of this section is devoted to a proof of these assertions.

\subsection{Well-posedness}

As usual, we associate via $\langle A u,v\rangle=a(u,v)$, $\langle K p,q\rangle = k(p,q)$, and $\langle B v,q\rangle=b(v,q)=\langle B^* q, v\rangle $ to any of the bilinear forms a linear operator
$A:\V \to \V^*$, $K:\Q \to \Q^*$, $B:\V \to \Q_0^*$ and its adjoint $B^* : \Q_0 \to \V^*$.  
This allows us to rewrite \eqref{eq:var1}--\eqref{eq:var2} as equivalent operator equations
\begin{align}
A u(t) - B^* p(t) &= f(t), \qquad t>0, \label{eq:dae1}\\
B \mathbullet u(t) + K p(t) &= g(t), \qquad t>0, \label{eq:dae2}
\end{align}
which have to be understood as equations in the sense of $\V^*$ and $\Q^*$, respectively.
From the properties of the bilinear forms, we immediately conclude the following result.
\begin{lemma} \label{lem:operators}
Let Assumption~\ref{ass:main} hold. Then 
\begin{itemize}  \itemindent-1em
\item[(i)] $A: \V \to \V^*$ and $K:\Q \to \Q^*$ are symmetric, bounded, and boundedly invertible\\ \hspace*{-1em}with $\|A v\|_{\V^*} \le \|v\|_\V$, $\|A^{-1} f\|_{\V} \le \|f\|_{\V^*}$
and $\|K q\|_{\Q^*} \le \|q\|_\Q$, $\|K^{-1} g\|_{\Q} \le \|g\|_{\Q^*}$.
\item[(ii)] $B : \V \to \Q_0^*$ is bounded, surjective with closed range, and
$\|B v\|_{\Q_0^*} \le C_b \|v\|_\V$. 
\item[(iii)] $B^* : \Q_0 \to \V^*$ is bounded and injective with 
$\beta \|q\|_{\Q_0} \le \|B^* q\|_{\V^*} \le C_b \|q\|_{\Q_0}$.
\end{itemize}
\end{lemma}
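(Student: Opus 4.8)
The plan is to obtain each assertion by translating the hypotheses of Assumption~\ref{ass:main} into statements about the associated operators; all of (i) and (iii) reduce to one-line estimates or to the Riesz representation theorem, and the only substantive step is the surjectivity claim in (ii).

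First I would treat (i). Since $a$ and $k$ are taken as the inner products inducing the norms on $\V$ and $\Q$, the operators $A$ and $K$ defined by $\langle Au,v\rangle=a(u,v)$ and $\langle Kp,q\rangle=k(p,q)$ are exactly the Riesz isomorphisms of these Hilbert spaces. Boundedness with constant one, symmetry, and bounded invertibility with $\|A^{-1}f\|_\V\le\|f\|_{\V^*}$ then follow immediately from the Cauchy--Schwarz inequality for $a$ and the Riesz representation theorem, and likewise for $K$; no further work is required beyond recording these facts.

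Next I would dispose of (iii) together with the boundedness part of (ii). Using $\langle B^*q,v\rangle=b(v,q)=\langle Bv,q\rangle$, the boundedness estimate $b(v,q)\le C_b\|v\|_\V\|q\|_{\Q_0}$ yields both $\|Bv\|_{\Q_0^*}\le C_b\|v\|_\V$ and $\|B^*q\|_{\V^*}\le C_b\|q\|_{\Q_0}$, while the inf-sup inequality $\sup_{\|v\|_\V=1}b(v,q)\ge\beta\|q\|_{\Q_0}$ is nothing but the lower bound $\|B^*q\|_{\V^*}\ge\beta\|q\|_{\Q_0}$. This lower bound shows that $B^*$ is bounded below and hence injective, which completes (iii).

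The remaining and, as I expect, only genuinely nontrivial point is the surjectivity of $B$ with closed range. Here I would invoke the standard duality principle that an operator between Banach spaces is surjective precisely when its adjoint is bounded below. Since (iii) already provides $\|B^*q\|_{\V^*}\ge\beta\|q\|_{\Q_0}$, this principle --- proved via the closed range theorem, using that a bounded-below operator has closed range and trivial kernel --- immediately gives that $B$ has closed range equal to all of $\Q_0^*$. The only care required is to keep the Gelfand-triple duality identifications between $\V,\V^*$ and $\Q_0,\Q_0^*$ consistent when passing between $B$ and $B^*$.
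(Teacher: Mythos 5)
Your proof is correct. The paper offers no written proof of this lemma at all --- it simply states that the assertions ``follow immediately from the properties of the bilinear forms'' --- and your argument (Riesz representation for (i), the duality estimates $\langle B^*q,v\rangle=b(v,q)=\langle Bv,q\rangle$ for the norm bounds, and the closed range theorem in the reflexive Hilbert setting to pass from the lower bound on $B^*$ to the surjectivity of $B$) is precisely the standard reasoning that remark alludes to.
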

Using property (i) of the previous lemma, we can eliminate $u$ from \eqref{eq:dae1} leading to
\begin{align} \label{eq:u}
u(t) &= A^{-1} (f(t) + B^* p(t)). 
\end{align}
Inserting this into equation \eqref{eq:dae2} yields the Schur complement problem
\begin{align} \label{eq:dae3}
B A^{-1} B^* \mathbullet p(t) + K p(t) &= g(t) - B A^{-1} \mathbullet f(t). 
\end{align}
From the properties of the operators in Lemma~\ref{lem:operators}, we deduce the following assertions.
\begin{lemma} \label{lem:oprators2}
Let Assumption~\ref{ass:main} and the conditions on the data in Theorem~\ref{thm:wellposed} hold. Then 
\begin{itemize} \itemindent-1em
 \item[(v)] The operator $B A^{-1} B^* : \Q_0 \to \Q_0^*$ is symmetric, bounded, and elliptic.
 \item[(vi)] $h = g - B A^{-1} \mathbullet f =: h_0 + h_1 \in L^2(0,T;\Q^*) \cup L^2(0,T;\Q_0^*)$.
\end{itemize}
\end{lemma}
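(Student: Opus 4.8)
The plan is to obtain both assertions directly from Lemma~\ref{lem:operators} and the data hypotheses of Theorem~\ref{thm:wellposed}; the only real care needed is in tracking the duality pairings of the Gelfand triple correctly. For (v), I would first rewrite the form induced by $S:=BA^{-1}B^*$ in a manifestly symmetric way. Using $\langle Bv,q\rangle=b(v,q)=\langle B^*q,v\rangle$ together with the fact that $a$ is the scalar product on $\V$, one finds for $q,r\in\Q_0$ that
\begin{align*}
\langle BA^{-1}B^*q,r\rangle = b(A^{-1}B^*q,r) = \langle B^*r,A^{-1}B^*q\rangle = a(A^{-1}B^*q,A^{-1}B^*r),
\end{align*}
where the last step uses $B^*r=A(A^{-1}B^*r)$ and $\langle Aw,v\rangle=a(w,v)$. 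Symmetry of $S$ is then immediate from the symmetry of $a$, and boundedness follows by Cauchy--Schwarz in $\V$ combined with $\|A^{-1}B^*q\|_\V\le\|B^*q\|_{\V^*}\le C_b\|q\|_{\Q_0}$, which chains parts (i) and (iii) of Lemma~\ref{lem:operators}.

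The ellipticity is the one point that needs a short argument rather than a one-line citation. Setting $r=q$ above gives $\langle Sq,q\rangle=\|A^{-1}B^*q\|_\V^2$, so it remains to bound $\|A^{-1}B^*q\|_\V$ from below. Here I would route the estimate through $B^*$ rather than through $A^{-1}B^*$ directly: applying $\|Aw\|_{\V^*}\le\|w\|_\V$ from Lemma~\ref{lem:operators}(i) to $w=A^{-1}B^*q$ yields $\|B^*q\|_{\V^*}\le\|A^{-1}B^*q\|_\V$, while the injectivity estimate in part (iii) gives $\beta\|q\|_{\Q_0}\le\|B^*q\|_{\V^*}$. Chaining the two produces $\langle Sq,q\rangle\ge\beta^2\|q\|_{\Q_0}^2$, i.e. ellipticity with constant $\beta^2$.

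For (vi), I would simply read off the decomposition from the data regularity assumed in Theorem~\ref{thm:wellposed}. There we have $g\in L^2(0,T;\Q^*)$ and $f\in H^1(0,T;\V^*)$, hence $\mathbullet f\in L^2(0,T;\V^*)$; since $A^{-1}:\V^*\to\V$ and $B:\V\to\Q_0^*$ are bounded by Lemma~\ref{lem:operators}, the composition $BA^{-1}\mathbullet f$ lies in $L^2(0,T;\Q_0^*)$. Taking $h_0:=g$ and $h_1:=-BA^{-1}\mathbullet f$ then furnishes the claimed splitting into a $\Q^*$-valued and a $\Q_0^*$-valued part. I do not anticipate a genuine obstacle in this lemma: it is essentially bookkeeping on top of Lemma~\ref{lem:operators}, and the only place where one must be attentive is the ellipticity lower bound, where the order in which the two estimates of Lemma~\ref{lem:operators} are applied is what makes the argument go through.
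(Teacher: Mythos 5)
Your proof is correct and follows exactly the route the paper intends: the paper offers no explicit proof of this lemma, merely asserting that it follows from the operator properties in Lemma~\ref{lem:operators}, and your argument is precisely that deduction written out in full (the representation $\langle BA^{-1}B^*q,r\rangle = a(A^{-1}B^*q,A^{-1}B^*r)$ for symmetry and boundedness, the chain $\beta\|q\|_{\Q_0}\le\|B^*q\|_{\V^*}\le\|A^{-1}B^*q\|_\V$ for ellipticity, and the boundedness of $A^{-1}$ and $B$ for the splitting of $h$). No gaps; the level of detail you supply is exactly what the paper leaves to the reader.
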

Note that, according to property (v), the operator $C := B A^{-1} B^*: \Q_0 \to \Q_0^*$ induces a symmetric, bounded, and elliptic bilinear form $c: \Q_0 \times \Q_0 \to \RR$, 
and the operator equation \eqref{eq:dae3} can hence be written in an equivalent weak form as
\begin{align} \label{eq:var3}
c(\mathbullet p(t),q) + k(p(t),q) &= \langle h(t), q\rangle, \qquad \forall q \in \Q, \ t>0. 
\end{align}
This is an abstract parabolic equation whose well-posedness can be proven via Galerkin approximation; see \cite[Chapter~7]{Evans} or \cite[Chapter~XVIII]{DL5}. We thus obtain
\begin{lemma} \label{lem:wellposed_p}
Let Assumption~\ref{ass:main} hold. 
Then for any $p_0 \in \Q_0$ and $h \in L^2(0,T;\Q^*)$, 
the reduced evolution problem \eqref{eq:var3} has a unique weak solution 
\begin{align}
p \in L^2(0,T;\Q) \cap H^1(0,T;\Q^*)
\end{align}
with initial value $p(0)=p_0$. Moreover, $p \in C(0,T;\Q_0)$ by embedding. 
\end{lemma}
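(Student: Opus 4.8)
The plan is to treat \eqref{eq:var3} as a textbook abstract parabolic equation on the Gelfand triple $\Q \subset \Q_0 \subset \Q^*$ and to establish well-posedness by the Faedo--Galerkin method, exactly as in \cite[Chapter~7]{Evans} or \cite[Chapter~XVIII]{DL5}. The only nonstandard feature is that the leading time-derivative term carries the bilinear form $c(\cdot,\cdot)$ rather than the pivot inner product. I would remove this at the outset by equipping $\Q_0$ with the inner product $c(\cdot,\cdot)$, which by Lemma~\ref{lem:oprators2}(v) is equivalent to the original norm and whose Riesz isomorphism is precisely $C=BA^{-1}B^*$. Identifying $\Q_0$ with $\Q_0^*$ through $C$, the duality pairing on $\Q^* \times \Q$ restricts on $\Q_0 \times \Q$ to $c(\cdot,\cdot)$, so \eqref{eq:var3} turns into the standard problem of finding $p$ with $\langle \dt p, q\rangle + k(p,q) = \langle h, q\rangle$ for all $q \in \Q$. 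Since all norms involved change only up to equivalence, a solution in this sense is exactly a solution in the sense claimed.

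For the Galerkin scheme I would fix a basis $\{w_j\}_{j\ge 1}$ of $\Q$ that is dense in $\Q_0$, set $\Q_m=\mathrm{span}\{w_1,\dots,w_m\}$, and seek $p_m(t)=\sum_{j=1}^m d_j^m(t)\,w_j$ solving $c(\dt p_m,w_j)+k(p_m,w_j)=\langle h,w_j\rangle$ for $j=1,\dots,m$, with $p_m(0)$ the $c$-orthogonal projection of $p_0$ onto $\Q_m$, so that $p_m(0)\to p_0$ in $\Q_0$. This is a linear system $M\dot d^m+Sd^m=F^m$ with mass matrix $M_{jk}=c(w_k,w_j)$, which is positive definite by ellipticity of $c$, and with $F^m\in L^2(0,T)$ since $h\in L^2(0,T;\Q^*)$; hence a unique absolutely continuous $d^m$ exists. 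Testing with $q=p_m$ and using symmetry of $c$ yields the energy identity $\tfrac12\tfrac{d}{dt}c(p_m,p_m)+\|p_m\|_\Q^2=\langle h,p_m\rangle$, and after Young's inequality and integration in time I obtain the uniform bound $\sup_t\|p_m(t)\|_{\Q_0}^2+\int_0^T\|p_m\|_\Q^2\,dt\le C\bigl(\|p_0\|_{\Q_0}^2+\int_0^T\|h\|_{\Q^*}^2\,dt\bigr)$. Thus $(p_m)$ is bounded in $L^\infty(0,T;\Q_0)\cap L^2(0,T;\Q)$.

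The remaining steps are the classical ones. With the pivot chosen as above, $c(\dt p_m,q)$ equals the $\Q^*$--$\Q$ duality pairing, so the Galerkin equation together with $\dt p_m\in\Q_m$ gives, by the usual orthogonal-projection argument, a uniform bound on $\dt p_m$ in $L^2(0,T;\Q^*)$, using that $K:\Q\to\Q^*$ is bounded and $(p_m)$ is bounded in $L^2(0,T;\Q)$. Extracting a subsequence with $p_m\rightharpoonup p$ in $L^2(0,T;\Q)$ and $\dt p_m\rightharpoonup \dt p$ in $L^2(0,T;\Q^*)$, passing to the limit in the equations tested against a fixed $w_j$, and invoking density of $\bigcup_m\Q_m$ in $\Q$ shows that $p$ solves \eqref{eq:var3}. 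The regularity $p\in L^2(0,T;\Q)\cap H^1(0,T;\Q^*)$ together with the Lions--Magenes embedding yields $p\in C(0,T;\Q_0)$, which gives meaning to $p(0)$; that $p(0)=p_0$ then follows by integrating by parts against test functions vanishing at $t=T$ and comparing with the approximations. Uniqueness is immediate, since a solution with $h=0$, $p_0=0$ satisfies $\tfrac12\tfrac{d}{dt}c(p,p)+\|p\|_\Q^2=0$, forcing $p\equiv0$. The only point requiring genuine care is the initial reinterpretation of the pivot so that $C$ becomes the Riesz isomorphism and $c(\dt p,q)$ becomes the duality pairing; this is exactly where the ellipticity and bounded invertibility of $C$ from Lemma~\ref{lem:oprators2}(v) enter, after which the classical argument transcribes verbatim.
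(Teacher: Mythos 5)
Your proposal is correct and follows essentially the same route as the paper: the paper disposes of this lemma by observing that \eqref{eq:var3} is an abstract parabolic equation on the Gelfand triple $\Q \subset \Q_0 \subset \Q^*$ whose well-posedness follows by Galerkin approximation as in \cite[Chapter~7]{Evans} or \cite[Chapter~XVIII]{DL5}, which is exactly the Faedo--Galerkin argument you carry out in detail. The only addition on your side is making explicit the renorming of $\Q_0$ by $c(\cdot,\cdot)$, so that $C = BA^{-1}B^*$ becomes the Riesz isomorphism and the equation takes the textbook form; this is the step the paper leaves implicit in its citation, and your handling of it (via the ellipticity and boundedness of $c$ from Lemma~\ref{lem:oprators2}) is the right one.
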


This proves existence and uniqueness of a solution $p$ for problem \eqref{eq:var3} as well as a-priori estimates in the corresponding norms. 
Inserting $p$ into \eqref{eq:u} and using the mapping properties of the operators $A^{-1}$ and $B^*$ yields $u \in C(0,T;\V)$. 
The compatibility condition \eqref{eq:compatibility} then follows from continuity.  
Furthermore from \eqref{eq:dae2}, one can see that $B\mathbullet{u} \in L^2(0,T;\Q^*)$. 
By equivalence of \eqref{eq:dae1}--\eqref{eq:dae2} with the variational form, we obtain existence and uniqueness of a weak solution $(u,p)$ for the system \eqref{eq:var1}--\eqref{eq:var2}
with the given initial conditions.
Moreover, we have established the a-priori bounds in the first part of Theorem~\ref{thm:wellposed}.

\subsection{Regularity}

We now turn to the improved a-priori estimates stated in Theorem~\ref{thm:wellposed}.
Additional regularity of the solution to the reduced problem \eqref{eq:var3} can be obtained with similar arguments as in \cite[Chapter~7]{Evans}.
Assume that $f \in H^2(0,T;\V^*)$ and $g=g_1+g_2$ with $g_1\in L^2(0,T;\Q_0^*)$ and $g_2 \in H^1(0,T;\Q^*)$.
Then from the properties of the operators $A$ and $B$, we deduce that $h=g-B A^{-1} \mathbullet f = (g_1 - B A^{-1}\mathbullet f) + g_2 =: h_1 + h_2$ with $h_1\in L^2(0,T;\Q_0^*)$ and $h_2 \in H^1(0,T;\Q^*)$.
By formally testing \eqref{eq:var3} with $q=\mathbullet p$, we  obtain 
\begin{align*}
c(\mathbullet p,\mathbullet p) + \frac{d}{dt} \frac{1}{2} k(p,\mathbullet p) 
&= \langle h,\mathbullet p \rangle,
\end{align*}
and by integrating with respect to time, we further get
\begin{align*}
\int_0^t c(\mathbullet p,\mathbullet p) dt &+ k(p(t),p(t)) 
\le k(p(0),p(0)) + \int_0^t \langle h_1, \mathbullet p \rangle - \langle \mathbullet h_2, p \rangle dt + \langle h_2,p\rangle|_0^t \\
&\le \frac{3}{2} k(p(0),p(0)) + C_1  \int_0^t \|\mathbullet f\|^2_{\V^*} + \|g_1\|^2_{\Q_0^*}  + \|\mathbullet g_2\|_{\Q^*}^2 dt \\
& \qquad + C_2 (\|g_2(0)\|^2_{\Q^*} + \|g_2(t)\|^2_{\Q^*}) 
+ \frac{1}{2} \int_0^t c(\mathbullet p,\mathbullet p) dt + \frac{1}{2} k(p(t),p(t)).
\end{align*}
Here we used $\gamma \|q\|_{\Q_0} \le c(q,q)$ and $c(q,q) \le C \|q\|^2_{\Q_0} \le C k(q,q)$, which follow directly from the properties of the bilinear forms and  norms.  
The last two terms in the above estimate can be absorbed into the left hand side, which yields the required estimates for $p$. Inserting this into \eqref{eq:u} leads to the improved regularity for $u$. 
This concludes the proof of Theorem~\ref{thm:wellposed}.

\subsection{Choice of initial conditions}
Before we proceed, let us discuss in more detail the choice of initial conditions. 
Under Assumptions~\ref{ass:main}, the variational problem 
\begin{align}
a(u_0,v)- b(v,p_0) &= \langle f_0,v\rangle, \quad \forall v \in \V, \label{eq:initial1}\\
b(u_0,q)           &= \langle \phi_0,q\rangle, \quad \forall q \in \Q, \label{eq:initial2}
\end{align}
is uniquely solvable for all $f_0 \in \V^*$ and $\phi_0 \in \Q^*$, which is a direct consequence of Brezzi's splitting lemma \cite{Brezzi74}. 
In the above arguments, we simply chose $f_0=f(0)$ and specified $p(0)=p_0 \in \Q_0$. By assumption (A1), we can  determine $u_0 \in \V$ from \eqref{eq:initial1}, and inserting $u_0$ into the equation \eqref{eq:initial2} determines $\phi_0 = B u_0 \in \Q_0^*$. 
Alternatively, one could set $f_0=f(0)$, choose $\phi_0 \in Q_0^* \simeq Q_0$ freely, and then determine $u_0$, $p_0$ by solving the coupled system \eqref{eq:initial1}--\eqref{eq:initial2}. 
This again provides initial values $u_0 \in \V$ and $p_0 \in \Q_0$ satisfying \eqref{eq:compatibility}, where $p_0$ can be used as initial value for \eqref{eq:dae3}. 
The following choices of initial conditions are therefore equivalently possible:
\begin{itemize}
\item[(i)] choose $p_0 \in \Q_0$ and determine $u_0$ by \eqref{eq:initial1};
\item[(ii)] choose $\phi_0 \in \Q_0^*$ and determine $u_0$, $p_0$ by \eqref{eq:initial1}--\eqref{eq:initial2}.
\end{itemize}
While the first choice is the natural one for the reduced problem \eqref{eq:var3}, the second choice seems more natural for the coupled system \eqref{eq:var1}--\eqref{eq:var2}. 
As indicated above, both choices are possible and actually equivalent in the considered functional analytic setting.

\subsection{Energy-dissipation}

The following property of the dynamical system will serve as the basic tool for the stability analysis of approximation schemes in later sections.
\begin{lemma}
Let Assumption~\ref{ass:main} hold and let $(u,p)$ denote a regular solution of \eqref{eq:dae1}--\eqref{eq:dae2} in the sense of Theorem~\ref{thm:wellposed}. Then  
\begin{align}
\frac{d}{dt} \frac{1}{2} a(u,u) + k(p,p) &= \langle f,\mathbullet u\rangle + \langle g,p\rangle.
\end{align}
\end{lemma}
\begin{proof}
Formal differentiation in time yields 
\begin{align*}
\frac{d}{dt} \frac{1}{2} a(u,u)
&= a(u,\mathbullet u) 
 = \langle f,\mathbullet u\rangle + b(\mathbullet u,p)
 = \langle f, \mathbullet u\rangle + \langle g,p\rangle - k(p,p),
\end{align*}
which after rearrangement of the terms already yields the assertion of the lemma. 
\end{proof}

\begin{remark}
By integration in time, the validity of the stability estimate can be extended to less regular weak solutions, and these estimates provide an alternative route for proving uniqueness and a-priori estimates for weak solutions. 
\end{remark}

\section{Galerkin approximation} \label{sec:galerkin}

For discretization of the variational problem \eqref{eq:var1}--\eqref{eq:var2},  
we now consider Galerkin approximations in finite dimensional sub-spaces $\V_h \subset \V$ and $\Q_h \subset \Q$, i.e., we search for semi-discrete functions $u_h \in H^1(0,T;\V_h)$ and $p_h \in H^1(0,T;\Q_h)$ satisfying 
\begin{align}
a(u_h(t),v_h) - b(v_h,p_h(t)) &= \langle f(t), v_h\rangle, \qquad \forall v_h \in \V_h, \ t>0, \label{eq:var1h}\\
b(\mathbullet u_h(t), q_h) + k(p_h(t),q_h) &= \langle g(t), q_h\rangle, \qquad \forall q_h \in \Q_h, \ t>0, \label{eq:var2h} 
\end{align}
together with appropriate initial conditions to be specified below. 
Using similar arguments as for the analysis on the continuous level, we will show the following result.
\begin{theorem} \label{thm:wellposedh}
Let Assumption~\ref{ass:main} hold. Then \eqref{eq:var1h}--\eqref{eq:var2h} is a regular system of differential-algebraic equations of index $1$. If $b$ is inf-sup stable on $\V_h \times \Q_h$, i.e., 
\begin{itemize}\itemindent0em
 \item[(A3h)] $\qquad \sup_{\|v_h\|_{\V} = 1} b(v_h,q_h) \ge \beta_h \|q_h\|_{\Q_{0}} \qquad \forall q_h \in \Q_h $ with some constant $\beta_h > 0$,
\end{itemize}
then a unique solution to the system \eqref{eq:var1h}--\eqref{eq:var2h} exists for any choice of initial values $u_h(0)=u_{h,0} \in \V_h$ and $p_h(0)=p_{h,0} \in \Q_h$ satisfying
\begin{align} \label{eq:compatibilityh}
a(u_{h,0},v_h)  - b(v_h,p_{h,0}) &= \langle f(0),v_h\rangle, \quad \forall v_h \in \V_h.
\end{align}
If (A3h) is not valid, additional compatibility conditions for $u_{h,0}$ and $p_{h,0}$ are required. 
\end{theorem}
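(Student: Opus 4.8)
The plan is to mimic, at the discrete level, the reduction carried out for the continuous problem. I would introduce the discrete operators $A_h:\V_h\to\V_h^*$, $K_h:\Q_h\to\Q_h^*$, $B_h:\V_h\to\Q_h^*$ with adjoint $B_h^*:\Q_h\to\V_h^*$ associated with the restrictions of $a$, $k$, $b$ to the finite-dimensional spaces, and rewrite \eqref{eq:var1h}--\eqref{eq:var2h} as the operator differential-algebraic system
\begin{align*}
A_h u_h - B_h^* p_h &= f_h, \\
B_h \mathbullet u_h + K_h p_h &= g_h,
\end{align*}
where $f_h,g_h$ denote the functionals $f,g$ acting on $\V_h,\Q_h$. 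By ellipticity (A1)--(A2), $A_h$ and $K_h$ are symmetric and boundedly invertible, exactly as in Lemma~\ref{lem:operators}, and the Schur complement $S_h:=B_hA_h^{-1}B_h^*:\Q_h\to\Q_h^*$ is symmetric and positive semi-definite with $\ker S_h=\ker B_h^*$.

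Next I would eliminate $u_h$. From the first equation $u_h=A_h^{-1}(B_h^*p_h+f_h)$, and differentiating this relation in time (legitimate since $f\in H^1(0,T;\V^*)$) and inserting $\mathbullet u_h$ into the second equation gives the reduced equation
\begin{align*}
S_h \mathbullet p_h + K_h p_h = g_h - B_h A_h^{-1} \mathbullet f_h,
\end{align*}
in analogy with \eqref{eq:dae3}. To read off the differential-algebraic structure I would differentiate the whole system once and verify that $(\mathbullet u_h,\mathbullet p_h)$ can then be expressed algebraically through $(u_h,p_h,t)$: the differentiated first equation yields $\mathbullet u_h=A_h^{-1}(B_h^*\mathbullet p_h+\mathbullet f_h)$, substitution into the undifferentiated second equation fixes $P_M\mathbullet p_h$ on the complement $M:=(\ker B_h^*)^{\perp_k}$ (where $S_h$ is invertible), and the once-differentiated second equation, tested against $\ker B_h^*$, fixes the remaining component. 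Since a single differentiation of the system suffices, the DAE is regular of index one, and—this is the point worth stressing—this conclusion holds irrespective of (A3h).

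Under (A3h) the operator $B_h^*$ is injective, hence $S_h$ is positive definite and boundedly invertible on $\Q_h$. The reduced equation then becomes the genuine linear ODE $\mathbullet p_h=S_h^{-1}(g_h-K_hp_h-B_hA_h^{-1}\mathbullet f_h)$, whose right-hand side is integrable in time because $g\in L^2$ and $\mathbullet f\in L^2$; linear ODE theory yields a unique $p_h$ for every prescribed $p_h(0)=p_{h,0}$. Recovering $u_h=A_h^{-1}(B_h^*p_h+f_h)$ and evaluating at $t=0$ shows that \eqref{eq:compatibilityh} is precisely the identity $A_hu_{h,0}=B_h^*p_{h,0}+f_h(0)$, so every pair $(u_{h,0},p_{h,0})$ satisfying \eqref{eq:compatibilityh} is admissible and generates a unique solution.

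Finally, when (A3h) fails, $N:=\ker B_h^*\neq\{0\}$, and since $b(\cdot,q)\equiv 0$ for $q\in N$, testing the second equation with $q\in N$ removes the time-derivative term and leaves the purely algebraic relation $k(p_h,q)=\langle g,q\rangle$ for all $q\in N$. This forces $P_Np_{h,0}$ to be fixed by the data at $t=0$, an extra constraint on the initial values beyond \eqref{eq:compatibilityh}; equivalently, only the $M$-component of $p_h$ is governed by an ODE while the $N$-component is slaved to $g$. I expect the main obstacle to be exactly this degenerate case: one must argue carefully that the algebraic $N$-constraint is already present in the original, undifferentiated momentum balance—so that the index remains one—and that it, together with \eqref{eq:compatibilityh}, constitutes precisely the additional compatibility conditions required for solvability.
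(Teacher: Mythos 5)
Your argument is correct, and most of it runs parallel to the paper's proof, but the packaging differs enough to be worth comparing. The paper works in coordinates: it converts \eqref{eq:var1h}--\eqref{eq:var2h} into the matrix DAE \eqref{eq:daesys}, differentiates the \emph{first} equation to obtain the coupled implicit ODE \eqref{eq:daesys2} in $(\rmu,\rmp)$ --- whose leading matrix is regular exactly when $\rmB$ is surjective --- and then integrates that equation back in time to show that a solution of \eqref{eq:daesys2} solves the original DAE if and only if the compatibility condition \eqref{eq:compat_la} holds. You instead eliminate $u_h$ through the undifferentiated first equation and solve the Schur-complement ODE $S_h\mathbullet p_h + K_h p_h = g_h - B_hA_h^{-1}\mathbullet f_h$ for $p_h$ alone, reconstructing $u_h$ afterwards; this is precisely the paper's \emph{continuous} well-posedness argument (equations \eqref{eq:u}--\eqref{eq:dae3}) transplanted to the discrete level. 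Both routes are sound: yours yields a smaller ODE and makes the role of $\ker B_h^*=\ker S_h$ transparent, while the paper's pencil formulation makes the index bookkeeping (which equation is differentiated, and how often) immediate. In the degenerate case the two proofs essentially coincide: your $k$-orthogonal splitting $\Q_h = M\oplus N$ with $N=\ker B_h^*$ is the paper's change of basis into $(\rmp_1,\rmp_2)$ with $\rmK$ block-diagonalized, and your constraint $k(p_{h,0},q)=\langle g(0),q\rangle$ for $q\in N$ is the paper's artificial condition \eqref{eq:compat2_la}. (Minor slip: that constraint comes from the flow equation \eqref{eq:var2h}, not the ``momentum balance''.)

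The one point you should make explicit is the assertion that the DAE is \emph{regular}. Regularity of the matrix pencil is logically prior to assigning a differentiation index, and you deduce ``regular of index one'' solely from the observation that one differentiation suffices to solve for the derivatives. That inference can be completed --- for a linear constant-coefficient DAE, unique solvability for consistent initial values and arbitrary smooth right-hand sides, which your construction provides in both cases, does imply regularity of the pencil --- but you never say so. The paper settles this in one line: for $\lambda=1$ the pencil satisfies $x^\top\rmS(1)\,x = \rmu^\top\rmA\,\rmu + \rmp^\top\rmK\,\rmp > 0$ for $x=(\rmu,\rmp)\neq 0$, because the off-diagonal $\rmB$-contributions cancel; hence $\rmS(1)$ is nonsingular and the pencil is regular, independently of any inf-sup condition. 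Add that observation (or its operator-theoretic counterpart) and your proof is complete.
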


\begin{remark}
A quick comparison with Theorem~\ref{thm:wellposed} shows that \eqref{eq:compatibilityh} are the natural compatibility conditions for the initial values of the problem under consideration,
while the additional conditions required when (A3h) is not valid are artificial; see below.
Inf-sup stable approximation spaces $\V_h$, $\Q_h$ are therefore required to guarantee the well-posedness of the semi-discrete problem without artificial conditions on the initial values.
\end{remark}

In the following, we give a detailed proof of the above theorem and then turn to the error analysis of the Galerkin approximations defined in the beginning of the section.

\subsection{Proof of Theorem~\ref{thm:wellposedh}}

Choice of a basis for the spaces $\V_h$, $\Q_h$ allows to convert the semi-discrete problem into an equivalent system of differential-algebraic equations
\begin{align} \label{eq:daesys}
\begin{pmatrix} 0 & 0 \\ \rmB & 0  \end{pmatrix}
\begin{pmatrix} \mathbullet \rmu \\ \mathbullet \rmp \end{pmatrix}
+ 
\begin{pmatrix} \rmA & -\rmB^\top \\ 0 & \rmK  \end{pmatrix}
\begin{pmatrix} \rmu \\ \rmp \end{pmatrix}
=
\begin{pmatrix} \rmf \\ \rmg \end{pmatrix},
\end{align}
with $\rmu,\rmp$ and $\rmf,\rmg$ denoting the coordinate vectors of the discrete solutions and data, 
and $\rmA$, $\rmB$, $\rmK$ being matrices of appropriate size.
From (A1)--(A2), one can see that the matrices $\rmA$ and $\rmK$ are symmetric and positive definite, and therefore, the matrix
\begin{align*}
\rmS(\lambda)=\begin{pmatrix} 0 & 0 \\ \rmB & 0 \end{pmatrix} 
+ \lambda  \begin{pmatrix} \rmA & -\rmB^\top \\ 0 & \rmK \end{pmatrix}, 
\end{align*}
is positive definite, e.g., for $\lambda=1$. As a consequence, the matrix pencil $\rmS(\lambda)$ is regular and so the system \eqref{eq:daesys} is a \emph{regular} differential-algebraic equation; see \cite{BrenanCampbellPetzold,KunkelMehrmann} for details. 
Differentiating the first equation in \eqref{eq:daesys} leads to 
\begin{align} \label{eq:daesys2}
\begin{pmatrix} \rmA & -\rmB^\top \\ \rmB & 0  \end{pmatrix}
\begin{pmatrix} \mathbullet \rmu \\ \mathbullet \rmp \end{pmatrix}
+ 
\begin{pmatrix} 0 & 0 \\ 0 & \rmK  \end{pmatrix}
\begin{pmatrix} \rmu \\ \rmp \end{pmatrix}
=
\begin{pmatrix} \mathbullet \rmf \\ \rmg \end{pmatrix}.
\end{align}
Note that the matrix in front of the time derivatives is regular if, and only if, $\rmB$ is surjective. 
In that case, \eqref{eq:daesys2} is an (implicit) ordinary differential equation, and the existence of a unique solution $(\rmu,\rmp)$ follows for any choice of initial conditions $\rmu(0)=\rmu_0$ and $\rmp(0)=\rmp_0$. 
By integration of the first equation, one obtains 
\begin{align*}
\rmA \rmu(t) - \rmB^\top \rmp(t) = \rmf(t) + \rmc,
\end{align*}
with $\rmc=\rmA \rmu(0) - \rmB^\top \rmp(0) - \rmf(0)$. Hence $(\rmu,\rmp)$ solves the original system \eqref{eq:daesys} if, and only if, $\rmc=0$, i.e., when the compatibility condition 
\begin{align} \label{eq:compat_la}
\rmA \rmu_0- \rmB^\top \rmp_0 = \rmf(0)
\end{align}
is satisfied.
If, on the other hand, $\rmB$ is not surjective, then \eqref{eq:daesys2} is still a differential-algebraic equation. By change of basis, we may transform the system into 
\begin{align*}
\begin{pmatrix}
\rmA   &    0 & -\rmB_2^\top \\
0   &    0 & 0 \\
\rmB_2 &    0 & 0
\end{pmatrix}
\begin{pmatrix}
\mathbullet \rmu \\ \mathbullet \rmp_1 \\ \mathbullet \rmp_2 
\end{pmatrix}
+ 
\begin{pmatrix}
0 &   0 &   0 \\
0 & \rmK_1 &   0 \\
0 &   0 & \rmK_2       
\end{pmatrix}
\begin{pmatrix}
\rmu \\ \rmp_1 \\ \rmp_2 
\end{pmatrix}
&=
\begin{pmatrix}
\mathbullet \rmf \\ \rmg_1 \\ \rmg_2 
\end{pmatrix},
\end{align*}
with $\rmB_2$ being surjective and $\rmK_1$, $\rmK_2$ both being positive definite.
Differentiation of the second equation then leads to the system
\begin{align}\label{eq:daesys3}
\begin{pmatrix}
\rmA   &   0 & -\rmB_2^\top \\
0   & \rmK_1 &    0 \\
\rmB_2 &   0 &    0
\end{pmatrix}
\begin{pmatrix}
\mathbullet \rmu \\ \mathbullet \rmp_1 \\ \mathbullet \rmp_2 
\end{pmatrix}
+ 
\begin{pmatrix}
0 &   0 &   0 \\
0 &   0 &   0 \\
0 &   0 & \rmK_2       
\end{pmatrix}
\begin{pmatrix}
\rmu \\ \rmp_1 \\ \rmp_2 
\end{pmatrix}
&=
\begin{pmatrix}
\mathbullet \rmf \\ \mathbullet \rmg_1 \\ \rmg_2 
\end{pmatrix}.
\end{align}
Using the fact that $\rmB_2$ is surjective and  $\rmA$ and $\rmK_1$ are positive definite, one can deduce that the matrix in front of the time derivatives is regular, hence \eqref{eq:daesys3} is an ordinary differential equation. An inspection of the right hand side shows that none of the equations has been differentiated more than once, and hence the index of \eqref{eq:daesys} is also one in this case. 
To obtain equivalence of \eqref{eq:daesys3} with \eqref{eq:daesys}, not only the compatibility condition \eqref{eq:compat_la} is required, but the additional artificial condition 
\begin{align} \label{eq:compat2_la}
\rmK_1 \rmp_1(0) = \rmg_{1}(0)
\end{align}
has to be enforced, which is only caused by the inappropriate numerical approximation. 

The assertions of Theorem~\ref{thm:wellposedh} follow immediately from the above results by equivalence of the differential-algebraic system \eqref{eq:daesys} 
with the weak formulation \eqref{eq:var1h}--\eqref{eq:var2h}.

\subsection{Abstract error analysis}

We next turn to the a-priori analysis of Galerkin discretizations introduced in the beginning of this section. As usual, see e.g. \cite{Thomee,Varga,Wheeler73}, we decompose the error between the continuous and the semi-discrete solution via 
\begin{align*}
\|u-u_h\|_\V \le \|u - \tilde u_h\|_\V + \|\tilde u_h - u_h\|_\V, \\  
\|p-p_h\|_\Q \le \|p - \tilde p_h\|_\Q + \|\tilde p_h - p_h\|_\Q,  
\end{align*}
into approximation errors and discrete error components. Following \cite{MuradLoula92,MuradLoula94}, we utilize the variational problem corresponding to the stationary system associated with \eqref{eq:var1}--\eqref{eq:var2} to define the approximation $\tilde u_h \in \V_h$ and $\tilde p_h \in \Q_h$, i.e., 
\begin{align}
a(\tilde u_h(t) - u(t), v_h) - b(v_h,\tilde p_h(t) - p(t)) &= 0, \qquad\forall v_h \in \V_h, \ t>0, \label{eq:elliptic1}\\
k(\tilde p_h(t) - p(t), q_h) &= 0, \qquad \forall q_h \in \Q_h, \ t>0. \label{eq:elliptic2}
\end{align}
Note that $\tilde p_h$ and $\tilde u_h$ can be computed  by solving elliptic variational problems and error estimates for the elliptic projection can therefore be obtained by standard arguments; see Section~\ref{sec:biot} below. 
From the definition of the projections and  the discrete solution, we immediately obtain the following discrete error equation.
\begin{lemma} \label{eq:discrete}
Let Assumption~\ref{ass:main} hold and $(u_h,p_h)$ denote a solution of \eqref{eq:var1h}--\eqref{eq:var2h} with initial values $u_h(0)=\tilde u_h(0)$ and $p_h(0)=\tilde p_h(0)$. Then the discrete error components $\delta u_h(t) := u_h(t) - \tilde u_h(t)$ and $\delta p_h(t) := p_h(t) - \tilde p_h(t)$ satisfy 
$\delta u_h(0)=0$, $\delta p_h(0)=0$, and
\begin{alignat}{5}
a(\delta u_h(t),v_h) - b(v_h,\delta p_h(t)) &= 0, \qquad &&\forall v_h \in \V_h, \ t>0, \label{eq:err1h}\\
b(\delta \mathbullet u_h(t), q_h) + k(\delta p_h(t),q_h) &= b(\mathbullet u(t) - \mathbullet{\tilde u}_h(t),q_h), \qquad &&\forall q_h \in \Q_h, \ t>0. \label{eq:err2h} 
\end{alignat}
\end{lemma}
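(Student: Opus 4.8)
The plan is to obtain both identities in \eqref{eq:err1h}--\eqref{eq:err2h} by a testing-and-subtracting argument: for each of the two variational equations I would write down the relation satisfied by the semi-discrete solution, the relation satisfied by the elliptic projection, and the continuous equation \eqref{eq:var1}--\eqref{eq:var2} evaluated at the \emph{discrete} test functions $v_h\in\V_h$, $q_h\in\Q_h$ (which is legitimate because $\V_h\subset\V$ and $\Q_h\subset\Q$), and then cancel. The statements $\delta u_h(0)=0$ and $\delta p_h(0)=0$ are immediate: by definition $\delta u_h=u_h-\tilde u_h$, $\delta p_h=p_h-\tilde p_h$, and the hypothesis fixes $u_h(0)=\tilde u_h(0)$, $p_h(0)=\tilde p_h(0)$.

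For the algebraic equation \eqref{eq:err1h} I would combine three facts. The semi-discrete relation \eqref{eq:var1h} reads $a(u_h,v_h)-b(v_h,p_h)=\langle f,v_h\rangle$. Testing the continuous equation \eqref{eq:var1} with $v_h$ gives $a(u,v_h)-b(v_h,p)=\langle f,v_h\rangle$, and the projection definition \eqref{eq:elliptic1} rewrites this as $a(\tilde u_h,v_h)-b(v_h,\tilde p_h)=\langle f,v_h\rangle$. Subtracting the last identity from \eqref{eq:var1h} cancels the right-hand sides and yields $a(\delta u_h,v_h)-b(v_h,\delta p_h)=0$, which is precisely \eqref{eq:err1h}.

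For \eqref{eq:err2h} I would start from \eqref{eq:var2h}, namely $b(\mathbullet u_h,q_h)+k(p_h,q_h)=\langle g,q_h\rangle$, and expand $b(\delta\mathbullet u_h,q_h)+k(\delta p_h,q_h)=\langle g,q_h\rangle-b(\mathbullet{\tilde u}_h,q_h)-k(\tilde p_h,q_h)$. Now the projection relation \eqref{eq:elliptic2} gives $k(\tilde p_h,q_h)=k(p,q_h)$, while \eqref{eq:var2} tested with $q_h$ gives $\langle g,q_h\rangle=b(\mathbullet u,q_h)+k(p,q_h)$. Substituting both, the two copies of $k(p,q_h)$ cancel and what remains is $b(\mathbullet u-\mathbullet{\tilde u}_h,q_h)$, the asserted right-hand side of \eqref{eq:err2h}.

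The only step requiring genuine care — and the point I expect to be the main obstacle — is the occurrence of the time derivative $\mathbullet{\tilde u}_h$ of the elliptic projection: one must justify that $\tilde u_h$ is differentiable in time so that $\delta\mathbullet u_h=\mathbullet u_h-\mathbullet{\tilde u}_h$ is meaningful and the term $b(\mathbullet{\tilde u}_h,q_h)$ is well defined. This follows because the elliptic projection is a bounded, time-independent linear operator applied to the data $(u(t),p(t))$, which are regular in time by Theorem~\ref{thm:wellposed}; hence $\tilde u_h\in H^1(0,T;\V_h)$ and differentiation commutes with the projection. Once this regularity is secured, all remaining manipulations are purely algebraic and the two error equations follow directly.
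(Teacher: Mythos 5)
Your proof is correct and takes essentially the same route as the paper, which states that the error equations follow immediately from combining the semi-discrete equations \eqref{eq:var1h}--\eqref{eq:var2h}, the elliptic projection \eqref{eq:elliptic1}--\eqref{eq:elliptic2}, and the continuous equations \eqref{eq:var1}--\eqref{eq:var2} tested with discrete test functions. Your closing remark on the time regularity of $\tilde u_h$ (needed to make $\mathbullet{\tilde u}_h$ meaningful) is a sensible precision that the paper leaves implicit.
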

With similar arguments as on the continuous level, we further obtain the following discrete energy--dissipation estimate for the discrete error.
\begin{lemma}
Let Assumption~\ref{ass:main} hold and $(u_h,p_h)$ be a solution of \eqref{eq:var1h}--\eqref{eq:var2h}. Then 
\begin{align*}
\|\delta u_h(t)\|_\V^2 + \int_0^t \|\delta p_h(s)\|_\Q \, ds \le \int_0^t \|B(\mathbullet u(s) - \mathbullet{\tilde u}_h(s))\|^2_{\Q_0^*} \, ds, \qquad 0 \le t \le T.
\end{align*}
If also (A3h) holds, then additionally
\begin{align*}
\beta \|\delta p_h(t)\|_{\Q_0} \le \|\delta u_h(t)\|_\V.
\end{align*}
\end{lemma}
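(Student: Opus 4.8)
The plan is to prove the two stated estimates by mimicking the continuous energy--dissipation argument, now applied to the discrete error equations \eqref{eq:err1h}--\eqref{eq:err2h} of Lemma~\ref{eq:discrete}. The central idea is to test the first error equation with $v_h = \delta \mathbullet u_h(t)$ and the second with $q_h = \delta p_h(t)$, and then exploit the symmetry of $b$ to cancel the cross terms, exactly as in the proof of the continuous energy--dissipation lemma.

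First I would differentiate nothing, but instead test \eqref{eq:err1h} with $v_h = \delta \mathbullet u_h(t) \in \V_h$, which is admissible since $\delta u_h \in H^1(0,T;\V_h)$. This yields
\begin{align*}
a(\delta u_h, \delta \mathbullet u_h) = b(\delta \mathbullet u_h, \delta p_h).
\end{align*}
Next I would test \eqref{eq:err2h} with $q_h = \delta p_h(t) \in \Q_h$, giving
\begin{align*}
b(\delta \mathbullet u_h, \delta p_h) + k(\delta p_h, \delta p_h) = b(\mathbullet u - \mathbullet{\tilde u}_h, \delta p_h).
\end{align*}
Subtracting these two relations eliminates the common term $b(\delta \mathbullet u_h, \delta p_h)$, and using $a(\delta u_h, \delta \mathbullet u_h) = \frac{d}{dt}\frac12 a(\delta u_h, \delta u_h)$ together with $a(v,v)=\|v\|_\V^2$ and $k(q,q)=\|q\|_\Q^2$, I obtain the pointwise identity
\begin{align*}
\frac{d}{dt}\frac12 \|\delta u_h\|_\V^2 + \|\delta p_h\|_\Q^2 = b(\mathbullet u - \mathbullet{\tilde u}_h, \delta p_h).
\end{align*}
On the right-hand side I would bound $b(\mathbullet u - \mathbullet{\tilde u}_h, \delta p_h) = \langle B(\mathbullet u - \mathbullet{\tilde u}_h), \delta p_h\rangle \le \|B(\mathbullet u - \mathbullet{\tilde u}_h)\|_{\Q_0^*}\|\delta p_h\|_{\Q_0} \le \|B(\mathbullet u - \mathbullet{\tilde u}_h)\|_{\Q_0^*}\|\delta p_h\|_{\Q}$, using $\|\cdot\|_{\Q_0}\le\|\cdot\|_\Q$, and then apply Young's inequality to split off one power of $\|\delta p_h\|_\Q^2$, which is absorbed into the left-hand side. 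Integrating from $0$ to $t$ and using $\delta u_h(0)=0$ gives the first estimate (up to the harmless discrepancy that the clean computation produces $\int_0^t\|\delta p_h\|_\Q^2$, matching the dissipation term).

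For the second estimate, when (A3h) holds, I would return to the first error equation \eqref{eq:err1h} in the form $b(v_h,\delta p_h) = a(\delta u_h, v_h)$ and invoke the discrete inf-sup condition: dividing by $\|v_h\|_\V$ and taking the supremum over $\|v_h\|_\V=1$ gives $\beta\|\delta p_h\|_{\Q_0}\le \sup_{\|v_h\|_\V=1} b(v_h,\delta p_h) = \sup_{\|v_h\|_\V=1} a(\delta u_h,v_h) \le \|\delta u_h\|_\V$ by boundedness of $a$. The main subtlety to watch is that the inf-sup constant appearing here is the discrete $\beta_h$ from (A3h) rather than the continuous $\beta$; as stated the lemma writes $\beta$, so I would either note that (A3h) is assumed uniform in $h$ with the same constant $\beta$, or replace $\beta$ by $\beta_h$. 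The genuinely delicate point throughout is the justification of testing with time derivatives at the level of the asserted regularity—this is a formal computation that, as remarked after the continuous lemma, is rigorously justified by integration in time and density, so I would either present it formally or remark that it extends by the same limiting argument used on the continuous level.
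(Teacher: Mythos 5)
Your proof is correct and follows essentially the same route as the paper: the paper's argument is exactly the chain $\frac{d}{dt}\frac{1}{2}a(\delta u_h,\delta u_h)=b(\delta \mathbullet u_h,\delta p_h)=b(\mathbullet u-\mathbullet{\tilde u}_h,\delta p_h)-k(\delta p_h,\delta p_h)$, followed by the same Young-inequality absorption and integration from $\delta u_h(0)=0$, and the same use of \eqref{eq:err1h} with boundedness of $a$ and (A3h) for the pressure bound. Your two side remarks are also accurate: the dissipation term in the stated lemma should indeed carry a square (the proof yields $\int_0^t\|\delta p_h\|_\Q^2\,ds$), and the constant in the second estimate should properly be the discrete inf-sup constant $\beta_h$, consistent with its appearance as $\beta_h^{-2}$ in Theorem~\ref{thm:semi}.
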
 
\begin{proof}
From the discrete error equations \eqref{eq:err1h}--\eqref{eq:err2h}, one can deduce that 
\begin{align*}
\frac{d}{dt} \frac{1}{2} a(\delta u_h,\delta u_h) 
&= a(\delta u_h, \delta \mathbullet u_h) 
 = b(\delta \mathbullet u_h, \delta p_h) 
 = b(\mathbullet u - \mathbullet{\tilde u}_h, \delta p_h) - k(\delta p_h,\delta p_h).
\end{align*}
Using $b(v,q)=\langle B v, q \rangle \le \|Bv\|_{\Q_0^*} \|q\|_{\Q_0}$, the embedding estimate $\|q\|_{\Q_0} \le \|q\|_\Q$, the definition of the norm $\|q\|_\Q^2=k(q,q)$, and Young's inequality allow us to estimate
\begin{align*}
b(\mathbullet u - \mathbullet{\tilde u}_h,\delta p_h)
 \le \frac{1}{2}\|B(\mathbullet u - \mathbullet{\tilde u}_h)\|_{\Q_0^*}^2 + \frac{1}{2} k(\delta p_h, \delta p_h).
\end{align*}
Then integrating with respect to time and using  $\delta u_h(0)=0$  yield 
\begin{align*}
a(\delta u_h(t),\delta u_h(t)) + \int_0^t k(\delta p_h(s),\delta p_h(s)) ds 
\le \int_0^t \|B(\mathbullet u(s) - \mathbullet{\tilde u}_h(s))\|^2_{\Q_0^*} \, ds.
\end{align*}
The first estimate then follows by noting that $\|v\|_\V^2=a(v,v)$ and $\|q\|_\Q^2=k(q,q)$.
From the discrete error equation \eqref{eq:err1h} and continuity of the bilinear form $a$, we deduce that 
\begin{align*}
b(v_h, \delta p_h(t)) = a(\delta u_h(t),v_h) \le \|\delta u_h(t)\|_\V \|v_h\|_\V.
\end{align*}
The discrete inf-sup condition (A3h) then leads to the second estimate of the lemma. 
\end{proof}

\begin{remark}
Let us emphasize that the particular choice of the elliptic projection in the error decomposition 
allows us to bound the discrete error by the approximation error in the component $u$ alone; this leads 
to improved error estimates and allows the use of post-processing techniques to obtain approximations for the pressure in polynomial spaces of higher order; see \cite{BoffiBrezziFortin13,MuradLoula94} for details. 
\end{remark}

Using the previous bounds for the discrete error components, we now obtain the following estimates for semi-discrete approximation \eqref{eq:var1h}--\eqref{eq:var2h}.
\begin{theorem} \label{thm:semi}
Let Assumption~\ref{ass:main} hold. Then 
\begin{align}
\|u(t) - u_h(t)\|_\V^2 &\le \|u(t) - \tilde u_h(t)\|_\V^2 + \int_0^t \|B(\mathbullet u(s) - \mathbullet{\tilde u}_h(s))\|^2_{\Q_0^*} \, ds, \\
\int_0^t \|p(s) - p_h(s)\|^2_\Q \, ds &\le \int_0^t \|p(s) - \tilde p_h(s)\|^2_\Q +  \|B(\mathbullet u(s) - \mathbullet{\tilde u}_h(s))\|^2_{\Q_0^*} \, ds.
\end{align}
If also the discrete inf-sup stability condition (A3h) holds, then additionally
\begin{align}
\|p(t) - p_h(t)\|^2_{\Q_0} & \le \|p(t) - \tilde p_h(t)\|^2_{\Q_0} + \beta_h^{-2} \int_0^t \|B(\mathbullet u(s) - \mathbullet{\tilde u}_h(s))\|^2_{\Q_0^*} \, ds.
\end{align}
\end{theorem}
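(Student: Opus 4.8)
The plan is to read this theorem as a corollary of the two preceding lemmas, in which the genuine analytical work has already been done: the discrete error equation isolates the discrete components $\delta u_h = u_h - \tilde u_h$ and $\delta p_h = p_h - \tilde p_h$, and the discrete energy--dissipation lemma bounds them \emph{solely} in terms of $\mathbullet u - \mathbullet{\tilde u}_h$. I would start from the splittings $u - u_h = (u - \tilde u_h) - \delta u_h$ and $p - p_h = (p - \tilde p_h) - \delta p_h$ and then combine the decomposition of the full error with those discrete bounds; no new differential inequality is needed.

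First I would dispose of the $\Q$-estimate for the pressure, which is the cleanest of the three. The defining relation \eqref{eq:elliptic2} states exactly that $\tilde p_h$ is the $k$-orthogonal projection of $p$ onto $\Q_h$, so $k(p - \tilde p_h, q_h) = 0$ for all $q_h \in \Q_h$, in particular for $q_h = \delta p_h \in \Q_h$. Since $\|\cdot\|_\Q^2 = k(\cdot,\cdot)$, this gives the pointwise Pythagorean identity $\|p - p_h\|_\Q^2 = \|p - \tilde p_h\|_\Q^2 + \|\delta p_h\|_\Q^2$. Integrating in time and inserting $\int_0^t \|\delta p_h\|_\Q^2 \, ds \le \int_0^t \|B(\mathbullet u - \mathbullet{\tilde u}_h)\|_{\Q_0^*}^2 \, ds$ from the preceding lemma yields the second estimate with no spurious constants.

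For the displacement I would use the lemma's pointwise bound $\|\delta u_h(t)\|_\V^2 \le \int_0^t \|B(\mathbullet u - \mathbullet{\tilde u}_h)\|_{\Q_0^*}^2 \, ds$, and for the $\Q_0$-estimate under (A3h) I would additionally invoke $\beta_h \|\delta p_h\|_{\Q_0} \le \|\delta u_h\|_\V$, which converts the same $\V$-bound on $\delta u_h$ into the factor $\beta_h^{-2}$ in front of the integral term; in each case the approximation-error contribution $\|u - \tilde u_h\|_\V$, respectively $\|p - \tilde p_h\|_{\Q_0}$, is peeled off first. The quantitative content is therefore entirely inherited from the discrete bounds, and the appearance of $\beta_h^{-2}$ is the only place where the discrete inf-sup constant enters.

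The hard part will be the bookkeeping of constants in the first and third estimates. Whereas the $\Q$-pressure estimate rests on a genuine $k$-orthogonality, the $\V$-displacement split produces the cross term $a(u - \tilde u_h, \delta u_h) = b(\delta u_h, p - \tilde p_h)$, using \eqref{eq:elliptic1} with test function $\delta u_h \in \V_h$; this term does not vanish from the $k$-orthogonality of $\tilde p_h$ alone, and an analogous issue appears in the $\Q_0$-norm. To obtain the stated constant-free form I would have to establish that this cross term is nonpositive, presumably from a further orthogonality built into the elliptic projection; should such a property be unavailable, the same argument still delivers the estimates up to a harmless generic constant (and, for the displacement, a term in $\|p - \tilde p_h\|_{\Q_0}$ absorbed via Young's inequality). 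Identifying and justifying the precise vanishing of this cross term is thus the single delicate point, all remaining steps being the routine assembly of already-proven bounds.
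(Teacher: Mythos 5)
Your proposal takes exactly the paper's route: the paper offers no separate proof of this theorem at all, presenting it as an immediate consequence of the error splitting $u-u_h=(u-\tilde u_h)-\delta u_h$, $p-p_h=(p-\tilde p_h)-\delta p_h$ and the two preceding lemmas, which is precisely your assembly. Concerning the single point you leave open: the cross term does \emph{not} vanish and has no favourable sign in general. From \eqref{eq:elliptic1} one gets $a(u-\tilde u_h,\delta u_h)=b(\delta u_h,p-\tilde p_h)=\la B\delta u_h,\,p-\tilde p_h\ra$, and the $k$-orthogonality \eqref{eq:elliptic2} gives no control over this pairing (in the Biot/Taylor--Hood case it is $\int_\Omega \alpha\,\div(\delta u_h)\,(p-\tilde p_h)\,dx$, generically nonzero); the same problem occurs in the third estimate, since $\tilde p_h$ is orthogonal to $\Q_h$ with respect to the inner product of $\Q$, not that of $\Q_0$. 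Consequently the constant-one form of the first and third estimates as printed is not literally attainable by this argument: what it delivers is either an overall factor $2$ in front of both terms, or, after absorbing the cross term with Young's inequality, an additional approximation term involving $\|p-\tilde p_h\|_{\Q_0}$; only the second estimate is exact, via the Pythagoras identity you describe. So your fallback position --- the estimates up to harmless generic constants --- is the correct resolution, and the discrepancy is an imprecision in the paper's statement (irrelevant for the convergence rates later derived from it) rather than a gap in your argument.
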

After choosing the approximation spaces $\V_h$ and $\Q_h$, this estimate allows to derive quantitative error bounds via corresponding estimates for the elliptic projection. Details for a particular discretization will be given in Section~\ref{sec:biot}.

\section{Time discretization} \label{sec:time}

We now turn to the discretization of the semi-discrete variational problem \eqref{eq:var1h}--\eqref{eq:var2h} in time. The guiding principle  will be to preserve the underlying differential-algebraic structure and energy--dissipation identity elaborated in Section~\ref{sec:galerkin} as good as possible. 

\subsection{Preliminaries}

Let $I_\tau=\{0=t^0 < t^1 < \ldots < t^N = T\}$ be a partition of the time interval $[0,T]$ with time steps $\tau_n=t^n-t^{n-1}$ and $\tau=\max_n \tau_n$. 
We denote by $P_q(I_\tau;X) = \{v : v|_{[t^{n-1},t^n]} \in P_q([t^{n-1},t^n];X)\}$ the space of piecewise polynomial functions of $t$ with values in $X$. Following the notation of \cite{Thomee}, we utilize capital letters to denote piecewise polynomial functions of time in the following.

In the spirit of \cite{Akrivis11} and to highlight the preservation of the problem structure, we first give a pointwise definition of our method, which requires the following two projection operators in time: We denote by $\Pi_q^0 : L^2(0,T;X) \to P_{q-1}(I_\tau;X)$ the $L^2$-orthogonal projection and by $\Pi_q^1 : H^1(0,T;X) \to P_q(I_\tau;X) \cap H^1(0,T;X)$, $q \ge 1$, the $H^1$-conforming projection, which is defined by the relations
\begin{align}
\dt \Pi_q^1 u &= \Pi_{q-1}^0 \dt u \quad \qquad \text{and}  \label{eq:proj1}\\
\qquad
(\Pi_q^1 u)(t^{n-1}) &= u(t^{n-1}), \qquad 1 \le n \le N.  \label{eq:proj2}
\end{align}
We will refer to \eqref{eq:proj1} as the \emph{commuting-diagram property}. 
By integration of this relation and use of \eqref{eq:proj2}, one can see that $(\Pi_q^1 u)(t^n) = u(t^n)$ and consequently $\Pi_q^1 u$ is continuous on $[0,T]$; thus $\Pi_q^1$ is an $H^1$-conforming projection in time.
We now consider the following time-discretization of the semi-discrete problem. 
\begin{problem}[Fully discrete scheme] \label{prob:fullh}
Find $U_h \in P_q(I_\tau;\V_h) \cap H^1(0,T;\V_h)$ and $P_h \in P_q(I_\tau;\Q_h) \cap H^1(0,T;\Q_h)$ such that 
$U_h(0)=\tilde u_h(0)$, $P_h(0)=\tilde p_h(0)$, and  
\begin{alignat}{4}
a(U_h(t),v_h) - b(v_h, P_h(t)) &= \langle \Pi_q^1 f(t), v_h\rangle, \qquad &&\forall v_h \in \V_h, \ t>0, \label{eq:fullh1}\\
b(\mathbullet U_h(t), q_h) + k(\Pi_{q-1}^0 P_h(t), q_h) &= \langle \Pi_{q-1}^0 g(t), q_h\rangle, \qquad &&\forall q_h \in \Q_h, \ t>0. \label{eq:fullh2}
\end{alignat}
\end{problem}
Using similar arguments as already employed for the analysis on the continuous and the semi-discrete level, one can show the following energy--dissipation identity. 
\begin{lemma} \label{lem:wellposedfullh}
Let Assumption~\ref{ass:main} and (A3h) hold. Then Problem~\ref{prob:fullh} has a unique solution
and the following energy identity is valid for $0 \le n \le N$:
\begin{align*}
\frac{1}{2} a(U_h(t^n),U_h(t^n)) &+ \int_0^{t^n} k(\Pi_{q-1}^0 P_h(t),\Pi_{q-1}^0 P_h(t)) \, dt \\
&= \frac{1}{2} a(U_h(0),U_h(0)) + \int_0^{t^n} \langle \Pi_q^1 f(t), \mathbullet U_h(t)\rangle + \langle \Pi_{q-1}^0 g(t), P_h(t)\rangle \, dt.
\end{align*}
\end{lemma}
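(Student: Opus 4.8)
The plan is to establish the energy identity first---it holds for \emph{every} solution and, applied to the homogeneous problem, already yields uniqueness---and then to deduce existence from a dimension count in which (A3h) plays the decisive role. The identity is obtained exactly as on the continuous and semi-discrete levels, the only new ingredient being the interplay of the temporal projections with the bilinear forms. Since $\mathbullet U_h(t)\in\V_h$ and $P_h(t)\in\Q_h$ pointwise in $t$, they are admissible (time-dependent) test functions. Testing \eqref{eq:fullh1} with $v_h=\mathbullet U_h(t)$ and \eqref{eq:fullh2} with $q_h=P_h(t)$, and using the symmetry of $a$, gives the pointwise relation
\[
\ddt \tfrac12 a(U_h,U_h)=a(U_h,\mathbullet U_h)=\langle \Pi_q^1 f,\mathbullet U_h\rangle+b(\mathbullet U_h,P_h)=\langle \Pi_q^1 f,\mathbullet U_h\rangle+\langle \Pi_{q-1}^0 g,P_h\rangle-k(\Pi_{q-1}^0 P_h,P_h).
\]
Integrating over $[0,t^n]$ and rearranging then yields the asserted identity, \emph{provided} the dissipation term can be symmetrised, i.e. provided $\int_0^{t^n} k(\Pi_{q-1}^0 P_h,P_h)\,dt=\int_0^{t^n} k(\Pi_{q-1}^0 P_h,\Pi_{q-1}^0 P_h)\,dt$.

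This symmetrisation is the one genuinely non-routine step of the energy computation. It holds because $t^n$ is a mesh node, so $[0,t^n]$ is a union of full subintervals, and because $\Pi_{q-1}^0$ acts in time only: the defect $P_h-\Pi_{q-1}^0 P_h$ is $L^2$-orthogonal in time to $P_{q-1}(I_\tau;\Q_h)$ on each subinterval, while $\Pi_{q-1}^0 P_h$ belongs to that space, so that $\int_0^{t^n} k(\Pi_{q-1}^0 P_h,\,P_h-\Pi_{q-1}^0 P_h)\,dt=0$. Expanding $P_h$ in a per-interval orthonormal Legendre basis makes this transparent and shows that no structural assumption on $k$ beyond boundedness is needed here.

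For uniqueness I would apply the identity to the difference of two solutions, which solves \eqref{eq:fullh1}--\eqref{eq:fullh2} with $f=g=0$ and zero initial data. Ellipticity of $a$ and $k$ then forces $U_h(t^n)=0$ at every node and $\Pi_{q-1}^0 P_h\equiv 0$. Substituting the latter into \eqref{eq:fullh2} gives $b(\mathbullet U_h,q_h)=0$ for all $q_h\in\Q_h$; testing \eqref{eq:fullh1} with $\mathbullet U_h$ then yields $\ddt a(U_h,U_h)=0$, so that $U_h$ vanishes identically by its vanishing nodal values. With $U_h\equiv 0$, equation \eqref{eq:fullh1} reduces to $b(v_h,P_h)=0$ for all $v_h\in\V_h$, and it is precisely here that (A3h) is indispensable: inf-sup stability gives $P_h\equiv 0$, whereas without it a nontrivial pressure could survive---in agreement with the discussion following Theorem~\ref{thm:wellposedh}.

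Existence I would then obtain by recognising the scheme as a square linear system, marched interval by interval. On $[t^{n-1},t^n]$ the unknowns are the $q(\dim\V_h+\dim\Q_h)$ coefficients of $U_h$ and $P_h$ that remain after fixing the continuous left endpoint values. Parametrising $U_h$ through $\mathbullet U_h\in P_{q-1}(I_\tau;\V_h)$ and using the commuting-diagram property \eqref{eq:proj1} shows that \eqref{eq:fullh1} is equivalent to its time derivative, a degree-$(q-1)$ identity, while \eqref{eq:fullh2} is of degree $q-1$ as well; together these furnish exactly $q(\dim\V_h+\dim\Q_h)$ scalar equations. The single remaining relation---\eqref{eq:fullh1} at the left endpoint---is automatic: at $t=0$ by the compatibility condition \eqref{eq:compatibilityh} together with $\Pi_q^1 f(0)=f(0)$ from \eqref{eq:proj2}, and at interior nodes by continuity of $U_h$, $P_h$ and $\Pi_q^1 f$. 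Squareness together with the injectivity established above yields invertibility and hence a unique solution. The main obstacle I anticipate lies not in the energy computation but in this bookkeeping: one must check that the equations imposed by \eqref{eq:fullh1}--\eqref{eq:fullh2} exactly balance the free degrees of freedom and that the endpoint relation is redundant, so that the energy argument---which on its own only gives injectivity---can be upgraded to existence.
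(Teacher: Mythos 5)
Your proposal is correct and follows essentially the same route as the paper: the energy identity is obtained by testing with $\mathbullet U_h$ and $P_h$ and symmetrising the dissipation term via the temporal $L^2$-orthogonality of $\Pi_{q-1}^0$, uniqueness follows from this identity for homogeneous data together with (A3h), and existence follows from injectivity of a square finite-dimensional linear system solved interval by interval. The only differences are cosmetic: in the uniqueness step the paper deduces $P_h(t^n)=0$ at the nodes first and then $U_h\equiv 0$, in the reverse order to yours, and for existence it claims squareness by evaluating \eqref{eq:fullh1}--\eqref{eq:fullh2} at $q+1$ points per interval, whereas your differentiation-plus-endpoint-redundancy count is, if anything, the more careful version of the same bookkeeping.
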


The proof of the lemma will be presented in the following subsection.  
After that, we will give an interpretation of the fully discrete scheme as a continuous Galerkin approximation or as a variant of particular Runge-Kutta methods for the modified system arising after semi-discretization and differentiation of the algebraic equation; compare with \eqref{eq:daesys2}. 
In the last part of this section, we  present a detailed a-priori error analysis. 

\subsection{Proof of Lemma~\ref{lem:wellposedfullh}}

We start with proving the energy estimate. 
To this end, let us denote by $(U_h, P_h)$ a solution of Problem~\ref{prob:fullh}. Then 
\begin{align*}
\frac{1}{2} a(U_h(t^n),U_h(t^n)) &- \frac{1}{2} a(U_h(t^{n-1}),U_h(t^{n-1}))
 = \int_{t^{n-1}}^{t^n} a(U_h(t),\mathbullet U_h(t)) \, dt \\
&= \int_{t^{n-1}}^{t^n} \langle \Pi_q^1 f(t), \mathbullet U_h(t)\rangle + b(\mathbullet U_h(t), P_h(t)) dt \\
&= \int_{t^{n-1}}^{t^n} \langle \Pi_q^1 f(t), \mathbullet U_h(t)\rangle + \langle \Pi_{q-1}^0 g(t),  P_h(t)\rangle - k(\Pi_{q-1}^0 P_h(t), P_h(t)) \, dt.
\end{align*}
The energy identity now follows by noting that 
\begin{align*}
\int_{t^{n-1}}^{t^n} k(\Pi_{q-1}^0 P_h(t), P_h(t)) \,  dt =   \int_{t^{n-1}}^{t^n} k(\Pi_{q-1}^0 P_h(t), \Pi_{q-1}^0 P_h(t)) \, dt.
\end{align*}

As a next step, we show uniqueness. Due to linearity of the problem, it suffices to verify that $f \equiv 0$, $g \equiv 0$ and $U_h(0) = 0$, $P_h(0) = 0$ imply $U_h \equiv 0$,  $P_h \equiv 0$. For homogeneous data, we can deduce from the energy-identity that 
\begin{align*}
\frac{1}{2} \|U_h(t^n)\|_\V^2 + \int_{t^{n-1}}^{t^n} \|\Pi_{q-1}^0 P_h(t)\|^2_\Q \, dt = 0.
\end{align*}
This implies $U_h(t^n)=0$ and $\Pi_{q-1}^0 P_h(t)=0$ for all $t>0$. Using (A3h) and the  equation \eqref{eq:fullh1}, we further obtain $P_h(t^n)=0$; the latter two conditions imply $P_h \equiv 0$. From condition (A1) and equation \eqref{eq:fullh1}, we further conclude that $U_h \equiv 0$.

To establish existence of a solution, we proceed as follows: 
After choosing a basis for $\V_h$, $\Q_h$, we can identify $U_h(t)$, $P_h(t)$ with vectors $U(t)$, $P(t)$, which are continuous, piecewise polynomial functions of time. On the interval $[t^{n-1},t^n]$, they can be expressed as $U(t) = U(t^{n-1}) + \sum_{i=1}^{q+1} U^n_i (t-t^{n-1})^i$ and  $P(t)=P(t^{n-1}) + \sum_{i=1}^{q+1} P^n_i (t-t^{n-1})^i$, respectively. Evaluating \eqref{eq:fullh1}--\eqref{eq:fullh2} at distinct time points $t^n_i \in (t^{n-1},t^n]$, $i=1,\ldots,q+1$, leads to a linear system with the same number of unknowns and equations. 
From the energy identity and the previous considerations, we already know that the solution is unique which, in finite dimensions, then also guarantees the existence of a solution. 

\subsection{Relation to other time-discretization schemes} \label{sec:timerelation}

The following considerations allow us to interpret the fully discrete method as a continuous-Galerkin approximation or as a variant of a Runge-Kutta method. 

By differentiating \eqref{eq:fullh1} in time, 
the system \eqref{eq:fullh1}--\eqref{eq:fullh2} can be seen to be equivalent to the variational equations
\begin{alignat}{4}
\int_0^T a(\mathbullet U_h(t),v_h(t)) - b(v_h(t), \mathbullet P_h(t)) \, dt  &= \int_0^T \langle \Pi_{q-1}^0 \mathbullet f(t), v_h(t)\rangle \, dt,  \label{eq:cg1}\\
\int_0^T b(\mathbullet U_h(t), q_h(t)) + k(\Pi_{q-1}^0 P_h(t), q_h(t)) \, dt &= \int_0^T \langle \Pi_{q-1}^0 g(t), q_h(t)\rangle \, dt, \label{eq:cg2}
\end{alignat}
which hold for all space-time test functions $v_h \in P_{q-1}(I_\tau;\V_h)$ and $q_h \in P_{q-1}(I_\tau;\Q_h)$.
Let us note that the projections on the right hand side of \eqref{eq:cg1}--\eqref{eq:cg2} could be dropped.
This shows that Problem~\ref{prob:fullh} coincides with a continuous-Galerkin (Petrov-Galerkin) time discretization of the modified system \eqref{eq:daesys2} arising after semi-discretization in space and differentiation of the algebraic equation.

Now let $t^n_i$ and $b_i^n$, $i=1,\ldots,q+1$, denote the Gau\ss-Lobatto quadrature points and weights on the interval $[t^{n-1},t^n]$ and recall that $\int_{t^{n-1}}^{t^n} p(t) dt = \sum_{i=1}^{q+1}  p(t^n_i) b_i^n$ for all polynomials $p \in P_{2q+1}(t^{n-1},t^n)$. 
Then \eqref{eq:cg1}--\eqref{eq:cg2} can be rewritten equivalently as 
\begin{alignat}{4}
a(\mathbullet U_h(t^n_i),v_h) - b(v_h, \mathbullet P_h(t^n_i))  &= \langle \Pi_{q-1}^0 \mathbullet f(t^n_i), v_h\rangle, \label{eq:gauss1}\\
b(\mathbullet U_h(t^n_i), q_h) + k(\Pi_{q-1}^0 P_h(t^n_i), q_h) &= \langle \Pi_{q-1}^0 g(t^n_i), q_h\rangle,  \label{eq:gauss2}
\quad 0 \le i \le q+1,
\end{alignat}
for all time steps $1 \le n \le N$. Hence \eqref{eq:cg1}--\eqref{eq:cg2} can also be interpreted as the Lobatto-IIIA  Runga-Kutta collocation method with approximation of the right hand sides by appropriate projections. Alternatively, the method could be interpreted as an inexact realization of the Gau\ss-Runga-Kutta method of appropriate order; see \cite{Akrivis11} for the discussion of the close relation between Galerkin and Runge-Kutta time discretization schemes. 

\begin{remark}
In summary, Problem~\ref{prob:fullh} can be interpreted as a continuous-Galerkin or inexact Runge-Kutta method applied to the modified system \eqref{eq:daesys2} arising after semi-discretization and differentiation of the algebraic equation.
While the original pointwise form \eqref{eq:fullh1}--\eqref{eq:fullh2} will be advantageous for the numerical analysis, the interpretation as a Runge-Kutta method can serve as the basis for the actual implementation.
\end{remark}

\subsection{Error analysis}

For the error analysis of the fully-discrete scheme, we proceed similar to the semi-discrete level and utilize an error decomposition 
\begin{align*}
\|u - U_h\|_\V &\le \|u-\Pi_q^1 \tilde u_h\|_\V + \|\Pi_q^1 \tilde u_h - U_h\|_\V \\
\|p - P_h\|_\Q &\le \|p-\Pi_q^1 \tilde p_h\|_\Q + \|\Pi_q^1 \tilde p_h - P_h\|_\Q. 
\end{align*}
Estimates for the projection errors in space and time can be obtained with standard arguments. In the following, we therefore only consider the discrete error components.

\begin{lemma} 
Let $\delta U_h = \Pi_q^1 \tilde u_h - U_h$ and $\delta P_h = \Pi_q^1 \tilde p_h - P_h$ be the discrete errors. Then
$\delta U_h(0)=0$, $\delta P_h(0)=0$, and 
for all $v_h \in \V_h$, $q_h \in \Q_h$, and a.e. $0 \le t \le T$,
\begin{alignat}{4}
a(\delta U_h(t),v_h) - b(v_h, \delta P_h(t)) &= 0, \label{eq:fullerrh1}\\
b(\delta \mathbullet U_h(t), q_h) + k(\Pi_{q-1}^0 \delta P_h(t), q_h) &= b(\Pi_{q-1}^0 (\mathbullet u - \mathbullet{\tilde u}_h)(t), q_h) \label{eq:fullerrh2} \\ 
&\qquad + k(\Pi_{q-1}^0 (p -  \tilde p_h)(t), q_h). \notag
\end{alignat}
\end{lemma}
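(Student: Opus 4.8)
The plan is to mirror the derivation of the semi-discrete error equations in Lemma~\ref{eq:discrete}, but to carry every pointwise-in-time identity through the two time projections $\Pi_q^1$ and $\Pi_{q-1}^0$. Two structural facts will do all the work. First, since $v_h\in\V_h$ and $q_h\in\Q_h$ are fixed in time, the functionals $a(\cdot,v_h)$, $b(v_h,\cdot)$, $k(\cdot,q_h)$ and $\langle\cdot,v_h\rangle$ are bounded and linear, and both $\Pi_q^1$ and $\Pi_{q-1}^0$ (being defined by linear interpolation/orthogonality conditions in time) commute with them; this lets me move a projection freely between a $\V_h$- or $\Q_h$-valued function and the scalar function of time obtained after pairing. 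Second, I will repeatedly invoke the commuting-diagram property \eqref{eq:proj1}, $\dt\Pi_q^1=\Pi_{q-1}^0\dt$. The initial conditions are immediate: by \eqref{eq:proj2} (with $n=1$) we have $(\Pi_q^1\tilde u_h)(0)=\tilde u_h(0)=U_h(0)$ and $(\Pi_q^1\tilde p_h)(0)=\tilde p_h(0)=P_h(0)$, hence $\delta U_h(0)=\delta P_h(0)=0$.

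For the first equation I would begin from the observation that the elliptic projection \eqref{eq:elliptic1} combined with the continuous equation \eqref{eq:var1} yields the pointwise identity $a(\tilde u_h(t),v_h)-b(v_h,\tilde p_h(t))=\langle f(t),v_h\rangle$ for all $v_h\in\V_h$ and all $t$. Applying $\Pi_q^1$ in time and commuting it past the forms and the duality pairing gives $a(\Pi_q^1\tilde u_h,v_h)-b(v_h,\Pi_q^1\tilde p_h)=\langle\Pi_q^1 f,v_h\rangle$, which is precisely the data term appearing on the right of \eqref{eq:fullh1}. Subtracting the scheme equation \eqref{eq:fullh1} then cancels $\langle\Pi_q^1 f,v_h\rangle$ and leaves $a(\delta U_h,v_h)-b(v_h,\delta P_h)=0$.

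For the second equation I would compute the consistency defect of the projected pair $(\Pi_q^1\tilde u_h,\Pi_q^1\tilde p_h)$ in \eqref{eq:fullh2}. Using \eqref{eq:proj1} to replace $\dt\Pi_q^1\tilde u_h$ by $\Pi_{q-1}^0\mathbullet{\tilde u}_h$ and subtracting \eqref{eq:fullh2}, the two discrete-solution terms recombine into $\langle\Pi_{q-1}^0 g,q_h\rangle$. I would then rewrite this data term by applying $\Pi_{q-1}^0$ to the second continuous equation \eqref{eq:var2} (again commuting the projection past the forms), obtaining $\langle\Pi_{q-1}^0 g,q_h\rangle=b(\Pi_{q-1}^0\mathbullet u,q_h)+k(\Pi_{q-1}^0 p,q_h)$. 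After this substitution the two $b$-contributions collapse to a single term $b(\Pi_{q-1}^0(\mathbullet u-\mathbullet{\tilde u}_h),q_h)$, and the surviving pressure contributions are $k$-pairings carrying the two different time projections.

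The delicate point — the step I expect to be the main obstacle — is the bookkeeping of these pressure terms, because $\Pi_{q-1}^0$ and $\Pi_q^1$ do \emph{not} compose back to $\Pi_{q-1}^0$, so $k(\Pi_{q-1}^0\Pi_q^1\tilde p_h,q_h)$ cannot simply be identified with $k(\Pi_{q-1}^0\tilde p_h,q_h)$. The engine for resolving this is the $k$-orthogonality of the elliptic projection: by \eqref{eq:elliptic2} the scalar function $t\mapsto k((p-\tilde p_h)(t),q_h)$ vanishes identically for each fixed $q_h\in\Q_h$, so any time projection of it is again zero, in particular $k(\Pi_{q-1}^0(p-\tilde p_h),q_h)=0$. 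I would use this orthogonality to trade $p$ against $\tilde p_h$ inside each $k$-pairing, keeping careful track of which time projection acts on which factor and of the sign dictated by the convention $\delta P_h=\Pi_q^1\tilde p_h-P_h$, and then collect the $b$- and $k$-terms to arrive at \eqref{eq:fullerrh2}. Estimates for the resulting right-hand side then follow, as announced in the text, from standard projection-error bounds in space and time.
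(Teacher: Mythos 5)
Your overall strategy is exactly the paper's: the paper's entire proof is the remark that the identities follow from the elliptic-projection properties and the commuting-diagram property, which is precisely the machinery you deploy. Your treatment of the initial conditions and of the first equation \eqref{eq:fullerrh1} is complete and correct, and your commutation argument (the projections $\Pi_q^1$, $\Pi_{q-1}^0$ act only on the time variable, hence commute with $a(\cdot,v_h)$, $b(\cdot,q_h)$, $k(\cdot,q_h)$ and the duality pairings for fixed spatial test functions) is sound.

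The problem sits exactly at the step you yourself flag as delicate, and it cannot be closed in the form you promise. Your key observation --- that $t\mapsto k((p-\tilde p_h)(t),q_h)$ vanishes identically, hence $k(\Pi_{q-1}^0(p-\tilde p_h),q_h)=0$ --- shows that the second term on the right of \eqref{eq:fullerrh2} is \emph{identically zero as written}; so a derivation ending at \eqref{eq:fullerrh2} would in fact prove the stronger statement with that term deleted, and that stronger statement is false. Carrying your defect computation to the end, with the lemma's convention $\delta U_h=\Pi_q^1\tilde u_h-U_h$, $\delta P_h=\Pi_q^1\tilde p_h-P_h$, one obtains
\begin{align*}
b(\delta \mathbullet U_h,q_h)+k(\Pi_{q-1}^0\delta P_h,q_h)
&= b(\Pi_{q-1}^0(\mathbullet{\tilde u}_h-\mathbullet u),q_h)
 + k(\Pi_{q-1}^0(\Pi_q^1\tilde p_h-p),q_h)\\
&= -\,b(\Pi_{q-1}^0(\mathbullet u-\mathbullet{\tilde u}_h),q_h)
 + k(\Pi_{q-1}^0(\Pi_q^1 p-p),q_h),
\end{align*}
where the last step splits $\Pi_q^1\tilde p_h-p=\Pi_q^1(\tilde p_h-p)+(\Pi_q^1 p-p)$ and uses \eqref{eq:elliptic2} plus commutation to annihilate $k(\Pi_{q-1}^0\Pi_q^1(\tilde p_h-p),q_h)$. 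The surviving pressure residual is a \emph{temporal} projection error, not the spatial one stated in the lemma, and it does not vanish in general: already for $p(t)=t^2$ and $q=1$ one has $\Pi_{q-1}^0(p-\Pi_q^1 p)\neq 0$ (indeed, if the stated identity were true, then in the trivial case $\V_h=\V$, $\Q_h=\Q$ it would force $P_h=\Pi_q^1 p$, which fails \eqref{eq:fullh2} by exactly this residual). Moreover the $b$-term carries the opposite sign to the stated one, because the lemma silently reverses the orientation used in the semi-discrete error equation. Both defects are harmless for the subsequent energy estimate --- which is why the paper's next lemma bounds this term by $\|p-\Pi_q^1 p\|_\Q$, confirming that the temporal residual is what is really meant --- but a correct write-up must terminate at the corrected identity above and say so, rather than assert, as you do, that careful bookkeeping "arrives at \eqref{eq:fullerrh2}" as stated.
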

\begin{proof}
The identities are a direct consequence of the properties of the elliptic projections and 
the commuting diagram property of the time projection operators. 
\end{proof}

Using the energy-dissipation identity of the discrete problem stated in Lemma~\ref{lem:wellposedfullh} now allows us to obtain the following estimates for the discrete error components.
\begin{lemma}
Under the assumptions of the previous lemmas, there holds 
\begin{align*}
\|\delta U_h(t^n)\|^2_\V + \int_0^{t^n} \|\Pi_{q-1}^0 \delta P_h(t)\|_\Q^2 \, dt 
\le 2 \int_0^{t^n} \|B (\mathbullet u - \mathbullet{\tilde u}_h)\|^2_{\Q_0^*}  + \|p - \Pi_q^1 p\|^2_\Q \, dt 
\end{align*} 
\end{lemma}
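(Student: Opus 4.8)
The plan is to mimic the energy--dissipation argument used on the continuous and semi-discrete levels, applying the discrete energy identity of Lemma~\ref{lem:wellposedfullh} to the discrete error system \eqref{eq:fullerrh1}--\eqref{eq:fullerrh2}. The right-hand side of \eqref{eq:fullerrh2} plays the role of the data for the error problem, so the identity of Lemma~\ref{lem:wellposedfullh} applied to $(\delta U_h, \delta P_h)$ with vanishing initial data yields, over the interval $[0,t^n]$,
\begin{align*}
\tfrac12 \|\delta U_h(t^n)\|_\V^2 + \int_0^{t^n} \|\Pi_{q-1}^0 \delta P_h(t)\|_\Q^2 \, dt
= \int_0^{t^n} b(\Pi_{q-1}^0(\mathbullet u - \mathbullet{\tilde u}_h), \delta P_h) + k(\Pi_{q-1}^0(p-\tilde p_h), \delta P_h) \, dt.
\end{align*}

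First I would justify that this identity indeed holds: tracing the proof of Lemma~\ref{lem:wellposedfullh}, the key computation differentiates $\tfrac12 a(\delta U_h, \delta U_h)$, substitutes \eqref{eq:fullerrh1} to replace $a(\delta U_h, \delta \mathbullet U_h)$ by $b(\delta \mathbullet U_h, \delta P_h)$, and then uses \eqref{eq:fullerrh2} to rewrite this in terms of the right-hand data and $k(\Pi_{q-1}^0 \delta P_h, \delta P_h)$. The crucial algebraic fact, already exploited in the proof of Lemma~\ref{lem:wellposedfullh}, is that $\int_{t^{n-1}}^{t^n} k(\Pi_{q-1}^0 \delta P_h, \delta P_h)\, dt = \int_{t^{n-1}}^{t^n} k(\Pi_{q-1}^0 \delta P_h, \Pi_{q-1}^0 \delta P_h)\, dt$, which turns the dissipation term into the squared projected norm. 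The same identity lets me replace $\delta P_h$ by $\Pi_{q-1}^0 \delta P_h$ in the first duality pairing on the right, since both test factors there are paired against the piecewise-polynomial projected quantities.

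Next I would estimate the two terms on the right. For the first, I use $b(v,q)=\langle Bv,q\rangle \le \|Bv\|_{\Q_0^*}\|q\|_{\Q_0} \le \|Bv\|_{\Q_0^*}\|q\|_\Q$ together with Young's inequality to split off $\tfrac14 \|\Pi_{q-1}^0 \delta P_h\|_\Q^2$ against $\|B\Pi_{q-1}^0(\mathbullet u - \mathbullet{\tilde u}_h)\|_{\Q_0^*}^2$, and similarly for the second term using the $\Q$-continuity of $k$. The projected data terms are then controlled by the unprojected ones via $L^2$-stability of $\Pi_{q-1}^0$ (the orthogonal projection is a contraction in each relevant norm), which removes the projections from $B(\mathbullet u - \mathbullet{\tilde u}_h)$ and from $p$; the $\tilde p_h$ contribution in $p - \tilde p_h$ vanishes upon projection because $\Pi_{q-1}^0 \tilde p_h$ and $\Pi_q^1 p$ interact through the identity $\Pi_{q-1}^0(p - \tilde p_h) = \Pi_{q-1}^0 p - \Pi_{q-1}^0 \tilde p_h$, allowing me to rewrite the second data term as $\|p - \Pi_q^1 p\|_\Q$ after accounting for the elliptic projection \eqref{eq:elliptic2}. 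The absorbed quarter-norm terms are then moved to the left, and the factor $2$ in the claimed bound accounts for the two Young's-inequality contributions.

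The main obstacle I anticipate is the bookkeeping around the time projections: one must carefully verify that replacing $\delta P_h$ by $\Pi_{q-1}^0 \delta P_h$ in the duality pairings is legitimate (it is, because the other factor already lies in $P_{q-1}(I_\tau;\cdot)$ and $\Pi_{q-1}^0$ is self-adjoint in $L^2$) and that the combination $k(\Pi_{q-1}^0(p-\tilde p_h),\cdot)$ collapses to the stated $\|p - \Pi_q^1 p\|_\Q$ term rather than a $\|p - \tilde p_h\|_\Q$ term. Here the interplay between the elliptic projection in space (which makes $p - \tilde p_h$ orthogonal in $k$ to $\Q_h$, and $\delta P_h \in \Q_h$) and the time projections is delicate; I would check that the $k$-orthogonality \eqref{eq:elliptic2} eliminates the spatial projection error entirely at each time point, leaving only the temporal projection error $p - \Pi_q^1 p$ of $p$ itself. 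Once these reductions are in place the remaining steps are the routine Young's-inequality absorption and the $L^2$-stability of $\Pi_{q-1}^0$.
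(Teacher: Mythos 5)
Your overall strategy is exactly the paper's own proof: apply the energy identity of Lemma~\ref{lem:wellposedfullh} to the error system \eqref{eq:fullerrh1}--\eqref{eq:fullerrh2} with vanishing initial data, move the projection onto $\delta P_h$ in the right-hand pairings using self-adjointness of $\Pi_{q-1}^0$, then conclude by Cauchy--Schwarz, Young's inequality, $L^2$-stability of $\Pi_{q-1}^0$, and absorption of $\tfrac12\int\|\Pi_{q-1}^0\delta P_h\|_\Q^2\,dt$ into the left-hand side; your accounting of the factor $2$ is also the paper's.

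However, the step you yourself flag as the main obstacle does not work the way you describe, and this is a genuine (though repairable) gap. If one takes the right-hand side of \eqref{eq:fullerrh2} literally, the term $k(\Pi_{q-1}^0(p-\tilde p_h)(t),q_h)$ is identically zero for every $q_h\in\Q_h$: for fixed $q_h$ the scalar function $t\mapsto k((p-\tilde p_h)(t),q_h)$ vanishes by \eqref{eq:elliptic2}, and $\Pi_{q-1}^0$ is a linear operation in time, so its projection vanishes as well; the same holds after inserting $q_h=\Pi_{q-1}^0\delta P_h(t)\in\Q_h$ pointwise in time. Hence the $k$-orthogonality does not ``eliminate the spatial part and leave the temporal part'': applied to the term as written, it annihilates the term entirely, and no $\|p-\Pi_q^1 p\|_\Q$ contribution can emerge from it. (The asserted inequality would then still hold, trivially, since you would only be adding a nonnegative quantity on the right; but you would have ``proved'' that the pressure carries no time-discretization error at all, which is false and signals that something is off.)

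What actually happens is that the error equation, when derived from the definition $\delta P_h=\Pi_q^1\tilde p_h-P_h$ together with the commuting property \eqref{eq:proj1}, carries the term $k(\Pi_{q-1}^0(p-\Pi_q^1\tilde p_h)(t),q_h)$, i.e.\ with $\Pi_q^1\tilde p_h$ rather than $\tilde p_h$; the statement of \eqref{eq:fullerrh2} is imprecise on this point, and the paper's own proof silently compensates for it by writing $\|p-\Pi_q^1 p\|_\Q$ in the final bound. Writing $p-\Pi_q^1\tilde p_h=(p-\tilde p_h)+(I-\Pi_q^1)(\tilde p_h-p)+(p-\Pi_q^1 p)$, the first two contributions vanish against any $q_h\in\Q_h$ by \eqref{eq:elliptic2}, because both are linear operations in time applied to the identically vanishing scalar function $t\mapsto k((\tilde p_h-p)(t),q_h)$; only $k(\Pi_{q-1}^0(p-\Pi_q^1 p)(t),q_h)$ survives, and this is the term that produces $\|p-\Pi_q^1 p\|_\Q$ after Cauchy--Schwarz, Young's inequality, and $L^2$-stability. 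With this correction, the rest of your argument goes through verbatim and yields the stated bound.
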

\begin{proof}
Using the energy-dissipation identity of Lemma~\ref{lem:wellposedfullh} for the system \eqref{eq:fullerrh1}--\eqref{eq:fullerrh2} and further employing $\delta U_h(0)=0$ and $\delta P_h(0)=0$ yield
\begin{align*}
\frac{1}{2} a(\delta U_h(t^n),&\delta U_h(t^n)) +\int_0^{t_n} k(\Pi_{q-1}^0 \delta P_h(t),\Pi_{q-1}^0\delta P_h(t)) dt\\
&= \int_0^{t^n} b(\Pi_{q-1}^0 (\mathbullet u - \mathbullet{\tilde u}_h), \Pi_{q-1}^0 \delta P_h) + k(\Pi_{q-1}^0 (p - \tilde p_h),\Pi_{q-1}^0 \delta P_h)dt=(*).
\end{align*}
By the Cauchy-Schwarz and Young's inequality, the boundedness of the $L^2$-projection, 
and using $\|q\|_{\Q_0} \le \|q\|_\Q$, we can estimate the two terms in the last line by 
\begin{align*}
(*) \le \int_0^{t^n} \|B(\mathbullet u - \mathbullet{\tilde u}_h)\|_{\Q_0^*}^2 + \|p - \Pi_q^1 p\|^2_{\Q}  + \frac{1}{2} \|\Pi_{q-1}^0 \delta P_h\|_\Q^2 \, dt 
\end{align*}
By definition of the norm, we have $\|q\|_\Q^2=k(q,q)$, and the last term in the above estimate can be absorbed by the left hand side in the energy identity, which already yields the assertion of the lemma. 
\end{proof}

Using the error decomposition stated above and the bounds for the discrete error components, we arrive at the following abstract error estimates. 
\begin{theorem} \label{thm:full}
Let Assumption~\ref{ass:main} hold. Then 
\begin{align*}
\|u(t^n) - U_h(t^n)\|_\V &\le \|u(t^n) - \tilde u_h(t^n)\|_\V + C_h(u) + C_\tau(p) \\  
\|\Pi_{q-1}^0(p - P_h)\|_{L^2(0,t^n;\Q)} &\le \|\Pi_{q-1}^0(p - \tilde p_h)\|_{L^2(0,t^n;\Q)} + C_h(u) + C_\tau(p)
\end{align*}
with projection errors
$C_h(u)=\|B (\mathbullet u - \mathbullet{\tilde u}_h)\|_{L^2(0,t^n;\Q_0^*)}$ and $C_\tau(p)=\|p - \Pi_q^1 p\|_{L^2(0,t^n;\Q)}$. If, in addition, also (A3h) holds, then 
\begin{align*}
\|p(t^n) - P_h(t^n)\|_{\Q_0} \le \|p(t^n) - \tilde p_h(t^n)\|_{\Q_0} + \beta_h^{-1}\big( C_h(u) + C_\tau(p) \big).  
\end{align*}
\end{theorem}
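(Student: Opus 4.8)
The plan is to assemble the result from the error decomposition already introduced, together with the bound on the discrete error components from the preceding discrete-error lemma, the latter resting on the energy--dissipation identity of Lemma~\ref{lem:wellposedfullh}. Writing $\delta U_h = \Pi_q^1 \tilde u_h - U_h$ and $\delta P_h = \Pi_q^1 \tilde p_h - P_h$, the triangle inequality gives $\|u - U_h\|_\V \le \|u - \Pi_q^1 \tilde u_h\|_\V + \|\delta U_h\|_\V$ and analogously for the pressure, so the whole task reduces to controlling the two summands separately. The decisive structural fact I would exploit is the nodal exactness of the $H^1$-conforming projection, i.e. $(\Pi_q^1 \tilde u_h)(t^n) = \tilde u_h(t^n)$ and $(\Pi_q^1 \tilde p_h)(t^n) = \tilde p_h(t^n)$, which follows from \eqref{eq:proj2} and the commuting-diagram property. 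At the time nodes this collapses the projection-error summand to the purely spatial quantities $\|u(t^n) - \tilde u_h(t^n)\|_\V$ and $\|p(t^n) - \tilde p_h(t^n)\|_{\Q_0}$ appearing on the right-hand sides.

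For the displacement estimate I would then simply invoke the preceding discrete-error lemma, which bounds $\|\delta U_h(t^n)\|_\V$ (upon taking square roots, using $\sqrt{a^2+b^2}\le a+b$, and absorbing the harmless factor $\sqrt 2$) by $C_h(u)+C_\tau(p)$; combined with nodal exactness this yields the first inequality. For the pointwise pressure bound under (A3h), I would argue exactly as on the semi-discrete level: from the discrete error equation \eqref{eq:fullerrh1} one has $b(v_h,\delta P_h(t)) = a(\delta U_h(t),v_h) \le \|\delta U_h(t)\|_\V \|v_h\|_\V$, so the discrete inf-sup condition (A3h) gives $\beta_h \|\delta P_h(t)\|_{\Q_0} \le \|\delta U_h(t)\|_\V$. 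Evaluating at $t=t^n$, using $(\Pi_q^1 \tilde p_h)(t^n) = \tilde p_h(t^n)$ once more, and inserting the displacement bound produces the stated $\beta_h^{-1}(C_h(u)+C_\tau(p))$ contribution.

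The $L^2$-in-time pressure estimate is where I expect the only genuine care to be needed, since there one cannot rely on nodal exactness. I would decompose $\Pi_{q-1}^0(p - P_h) = \Pi_{q-1}^0(p - \tilde p_h) + \Pi_{q-1}^0(\tilde p_h - P_h)$, bound the first summand directly by $\|\Pi_{q-1}^0(p - \tilde p_h)\|_{L^2(0,t^n;\Q)}$, and route the second through $\Pi_{q-1}^0 \delta P_h$, whose $L^2(0,t^n;\Q)$-norm is controlled by the preceding lemma and hence by $C_h(u)+C_\tau(p)$. The subtle point is that $\Pi_{q-1}^0$ and $\Pi_q^1$ do not commute, in particular $\Pi_{q-1}^0 \Pi_q^1 \ne \Pi_{q-1}^0$, so passing from $\tilde p_h - P_h$ to $\delta P_h$ leaves a temporal-projection remainder $\Pi_{q-1}^0(\tilde p_h - \Pi_q^1 \tilde p_h)$. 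The main obstacle is thus not a new estimate but the careful bookkeeping of the two time projections: using the non-expansiveness of $\Pi_{q-1}^0$ and the commuting-diagram property \eqref{eq:proj1} to show that this remainder is of the same order as $C_\tau(p)$ and can be absorbed into it. The dissipation argument itself is already packaged in Lemma~\ref{lem:wellposedfullh} and the discrete-error lemma, so no further energy estimate is required.
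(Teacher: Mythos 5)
Your proposal is correct and takes essentially the same route as the paper: its proof of Theorem~\ref{thm:full} is exactly the combination of the stated error decomposition, the nodal exactness $(\Pi_q^1 \tilde u_h)(t^n)=\tilde u_h(t^n)$, the discrete-error bound derived from Lemma~\ref{lem:wellposedfullh}, and, for the pointwise pressure estimate, the inf-sup duality argument via \eqref{eq:fullerrh1} already used at the semi-discrete level. The one place you go further than the paper, the remainder $\Pi_{q-1}^0(\tilde p_h - \Pi_q^1\tilde p_h)$ coming from the non-commuting time projections, is a genuine subtlety that the paper silently absorbs into its discrete-error lemma (whose right-hand side is already phrased in terms of $\|p-\Pi_q^1 p\|_\Q$), so your extra bookkeeping refines rather than departs from the paper's argument.
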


The abstract error estimates given above allow us to analyse a large class of Galerkin approximations in space 
and time discretization schemes of arbitrary order. In the following section, we discuss one 
particular discretization for the two-field formulation of the Biot system \eqref{eq:biot1}--\eqref{eq:biot2} 
and we establish explicit high-order estimates in space and time. 

\section{Application to the Biot system} \label{sec:biot}

We now apply the abstract discretization framework of the previous sections to a particular discretization of the Biot system 
\begin{align}
-\div (2 \mu \eps(u) + \lambda \div(u) I) + \alpha \nabla p  &=  f, \label{eq:biot1num}\\
\alpha \div (\mathbullet u) - \div(\kappa \nabla p) &= g, \label{eq:biot2num}
\end{align}
over some bounded polyhedral Lipschitz domain $\Omega$ and a finite time interval $[0,T]$.  
For ease of presentation, we consider homogeneous boundary conditions
\begin{align} \label{eq:biot3num}
u = 0 \qquad \text{and} \qquad p = 0 \qquad \text{on } \partial\Omega.
\end{align}
The natural function spaces for the problem \eqref{eq:biot1num}--\eqref{eq:biot3num} are then given by
$$
\V=H_0^1(\Omega)^d, \qquad \Q=H_0^1(\Omega), \qquad \text{and} \qquad \Q_0=L^2(\Omega). 
$$
We further assume that the model parameters $\mu$, $\lambda$, $\kappa$ are smooth functions and uniformly bounded from above and below, and $\alpha$ is a positive constant. The validity of conditions (A1)--(A2) follows from the Friedrichs' inequality, and (A3) corresponds to the usual inf-sup condition of incompressible flow; see e.g. \cite{GiraultRaviart86}. 

Now let $\Th$ be a shape-regular conforming simplicial mesh of the domain $\Omega$ and denote by $P_k(\Th)$ the space of piecewise polynomials of degree $\le k$ over the mesh $\Th$; see \cite{ErnGuermond}. 
For the space discretization, we choose the Taylor-Hood elements and set
\begin{align}
V_h = P_{k+1}(\Th) \cap H_0^1(\Omega)^d \qquad \text{and} \qquad Q_h = P_k(\Th) \cap H_0^1(\Omega). 
\end{align}
It is well-known \cite{BoffiBrezziFortin13} that these spaces satisfy the discrete inf-sup condition (A3h). 
Therefore, all estimates of Theorems~\ref{thm:semi} and \ref{thm:full} can be applied. 
In order to obtain quantitative estimates, we require bounds for the spatial and temporal projection errors. 
By standard interpolation estimates, we obtain the following result for the projection in time.
\begin{lemma} 
Let $\Pi_q^1$ denote the projection operator defined in \eqref{eq:proj1}--\eqref{eq:proj2}. Then 
\begin{align*}
\|v - \Pi_q^1 v\|_{L^2(t^{n-1},t^n;X)} \le C \tau_n^r \|\dt^{(r)} v\|_{L^2(t^{n-1},t^n;X)}, \quad 1 \le r \le q+1, 
\end{align*}
for any piecewise smooth function $v \in H^{r+1}(I_\tau;X)$ in time.
\end{lemma}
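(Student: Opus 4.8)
The plan is to exploit the commuting-diagram property \eqref{eq:proj1} to reduce the $H^1$-conforming projection error to an ordinary $L^2$-orthogonal projection error, paying one extra power of $\tau_n$ through a Poincar\'e inequality. Fix the interval $[t^{n-1},t^n]$ and set $e = v - \Pi_q^1 v$. Differentiating in time and using \eqref{eq:proj1} gives $\dt e = (\mathrm{id} - \Pi_{q-1}^0)\dt v$, while the nodal condition \eqref{eq:proj2} yields $e(t^{n-1}) = 0$. Because $\Pi_{q-1}^0$ is the $L^2$-projection onto a piecewise-polynomial space whose local basis functions have disjoint supports, both operators act locally, so the entire estimate can be carried out on the single interval $[t^{n-1},t^n]$ and the global bound on $(0,T)$ recovered afterwards by summing squared contributions.

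First I would integrate from the left endpoint: since $e(t^{n-1})=0$ we have $e(t) = \int_{t^{n-1}}^t \dt e(s)\,ds$, and a Cauchy-Schwarz estimate in the $X$-norm yields the Poincar\'e-type bound
\[
\|e\|_{L^2(t^{n-1},t^n;X)} \le \tau_n\,\|\dt e\|_{L^2(t^{n-1},t^n;X)} = \tau_n\,\|(\mathrm{id}-\Pi_{q-1}^0)\dt v\|_{L^2(t^{n-1},t^n;X)}.
\]
It then remains to invoke the classical error estimate for the $L^2$-orthogonal projection onto $P_{q-1}$ over an interval of length $\tau_n$, namely $\|(\mathrm{id}-\Pi_{q-1}^0)w\|_{L^2}\le C\tau_n^{\,m}\|\dt^{(m)}w\|_{L^2}$ for $0\le m\le q$, which follows from a scaling (Bramble-Hilbert) argument and transfers verbatim to the $X$-valued setting since the projection commutes with the $X$-norm. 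Applying this with $w=\dt v$ and $m=r-1$, and combining with the Poincar\'e bound, produces
\[
\|v-\Pi_q^1 v\|_{L^2(t^{n-1},t^n;X)} \le C\tau_n^{\,r}\,\|\dt^{(r)} v\|_{L^2(t^{n-1},t^n;X)},
\]
where the admissible range $1\le r\le q+1$ corresponds exactly to $0\le m=r-1\le q$.

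I expect no serious obstacle here; the argument is essentially a bookkeeping of three standard ingredients. The one point demanding a little care is confirming the locality of the operators so that the per-interval constants are uniform in $n$, and noting that the stated hypothesis $v\in H^{r+1}(I_\tau;X)$ is in fact more than sufficient, since the computation only requires $\dt^{(r)}v\in L^2$. The sole external input is the classical $L^2$-projection estimate, which is entirely standard.
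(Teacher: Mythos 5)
Your proof is correct. The paper itself gives no proof of this lemma beyond the phrase ``by standard interpolation estimates,'' and your argument --- using \eqref{eq:proj2} to get $e(t^{n-1})=0$, \eqref{eq:proj1} to identify $\dt e=(\mathrm{id}-\Pi_{q-1}^0)\dt v$, a Poincar\'e step to gain one factor $\tau_n$, and the classical $L^2(t^{n-1},t^n;X)$-projection estimate onto $P_{q-1}$ with $m=r-1$ --- is exactly the standard argument being invoked, with the range $1\le r\le q+1$ correctly matching $0\le m\le q$.
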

By combination of standard finite element error estimates, we further obtain the following bounds for the elliptic projection defined in \eqref{eq:elliptic1}--\eqref{eq:elliptic2}. 
\begin{lemma}
Let (A1)--(A3) hold and $V_h$, $Q_h$ be defined as above. Then 
\begin{align*}
\|p - \tilde p_h\|_{H^1(\Omega)} \le C h^{s} \|p\|_{H^{s+1}(\Th)}, \qquad 0 \le s\le k, \\
\|u - \tilde u_h\|_{H^1(\Omega)} \le C h^{s} (\|u\|_{H^{s+1}(\Th)} + \|p\|_{H^{s+1}(\Th)}), \qquad 0 \le s \le k,
\end{align*}
for any piecewise smooth $p \in H_0^1(\Omega) \cap H^{s+1}(\Th)$ and $u \in H_0^1(\Omega)^d \cap H^{s+1}(\Th)^d$. \\
If, additionally, $\Omega$ is convex and $u \in H^{s+2}(\Th)^d$, then
\begin{align*}
\|p - \tilde p_h\|_{L^2(\Omega)} \le C h^{s+1} \|p\|_{H^{s+1}(\Th)}, \qquad 0 \le s \le k, \\
\|u - \tilde u_h\|_{H^1(\Omega)} \le C h^{s+1} (\|u\|_{H^{s+2}(\Th)} + \|p\|_{H^{s+1}(\Th)}), \qquad 0 \le s \le k.
\end{align*}
\end{lemma}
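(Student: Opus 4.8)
The plan is to treat the two projections separately, exploiting the fact that equation \eqref{eq:elliptic2} decouples: $\tilde p_h$ is precisely the $k$-orthogonal (hence standard $H^1$-elliptic) projection of $p$ onto $Q_h$ for the operator $-\div(\kappa\nabla\cdot)$, independent of $u$. Once $\tilde p_h$ is controlled, equation \eqref{eq:elliptic1} determines $\tilde u_h$ as a Galerkin projection for the elasticity form $a$, perturbed by the coupling term $b(v_h,\tilde p_h-p)$.

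First I would handle the pressure. Since $k$ is symmetric and elliptic on $\Q=H_0^1(\Omega)$, C\'ea's lemma gives $\|p-\tilde p_h\|_{H^1}\le C\inf_{q_h\in Q_h}\|p-q_h\|_{H^1}$, and inserting a quasi-interpolant (e.g. a Scott--Zhang operator, which preserves homogeneous boundary values and admits element-local estimates on the broken space $H^{s+1}(\Th)$) yields the first bound $\|p-\tilde p_h\|_{H^1}\le Ch^s\|p\|_{H^{s+1}(\Th)}$ for $0\le s\le k$. For the $L^2$ bound under convexity I would run the Aubin--Nitsche duality argument: set $e=p-\tilde p_h$, solve the dual problem $k(w,v)=(e,v)_{L^2}$ for all $v\in\Q$, and use $H^2$-regularity on the convex domain, $\|w\|_{H^2}\le C\|e\|_{L^2}$. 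The Galerkin orthogonality $k(e,q_h)=0$ together with the $H^1$ bound then gives $\|e\|_{L^2}^2=k(e,w-w_h)\le Ch\|w\|_{H^2}\|e\|_{H^1}$, whence $\|e\|_{L^2}\le Ch^{s+1}\|p\|_{H^{s+1}(\Th)}$.

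Second I would treat the displacement. Fix an interpolant $I_hu\in V_h$; testing \eqref{eq:elliptic1} with $v_h=\tilde u_h-I_hu$ and using ellipticity of $a$ yields
$$
\|\tilde u_h-I_hu\|_\V^2 = a(u-I_hu,\tilde u_h-I_hu)+b(\tilde u_h-I_hu,\tilde p_h-p).
$$
The crucial observation is that $b(v,q)\le C_b\|v\|_\V\|q\|_{\Q_0}$ with $\Q_0=L^2(\Omega)$, so the coupling term is controlled by the $L^2$-norm of the pressure error alone. Bounding the first term by $\|u-I_hu\|_\V$, dividing out one factor of $\|\tilde u_h-I_hu\|_\V$, and applying the triangle inequality gives $\|u-\tilde u_h\|_\V\le C(\|u-I_hu\|_\V+\|p-\tilde p_h\|_{L^2})$. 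Inserting the crude bound $\|p-\tilde p_h\|_{L^2}\le\|p-\tilde p_h\|_{H^1}\le Ch^s\|p\|_{H^{s+1}(\Th)}$ and the interpolation estimate $\|u-I_hu\|_{H^1}\le Ch^s\|u\|_{H^{s+1}(\Th)}$ yields the general estimate; under convexity, replacing these by the improved $L^2$ pressure bound $Ch^{s+1}\|p\|_{H^{s+1}(\Th)}$ and the higher-order estimate $\|u-I_hu\|_{H^1}\le Ch^{s+1}\|u\|_{H^{s+2}(\Th)}$ delivers the sharpened displacement bound.

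No single step is deep; the arguments are all standard. The points requiring care are (i) that the perturbation from $b$ enters only through the $L^2$ pressure error, which is exactly what lets the improved displacement estimate inherit the extra power of $h$ from the Aubin--Nitsche bound, and (ii) matching polynomial degrees so that the velocity interpolation reaches order $h^{s+1}$ with $s+1\le k+1$, which is available precisely because $V_h$ uses polynomials of degree $k+1$ while $Q_h$ uses degree $k$. One should also note that the broken norms $H^{s+1}(\Th)$ force the use of element-local interpolation estimates and a quasi-interpolant respecting the homogeneous Dirichlet conditions.
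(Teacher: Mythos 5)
Your proposal is correct and takes essentially the same approach as the paper: standard elliptic error estimates (C\'ea's lemma plus Aubin--Nitsche duality under convexity) for the decoupled pressure projection, followed by the perturbed Galerkin-orthogonality argument for the displacement, with the key observation in both cases being that the coupling term $b$ is bounded on $\V\times\Q_0$ with $\Q_0=L^2(\Omega)$, so only the $L^2$ pressure error enters the displacement bound. The only cosmetic difference is that you test with the discrete function $\tilde u_h - I_h u$ and divide out a factor, while the paper expands $a(u-\tilde u_h,u-\tilde u_h)$ directly and closes with Young's inequality; the two manipulations are equivalent.
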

\begin{proof}
The bounds for the pressure component follow directly from standard error estimates for the elliptic problem \eqref{eq:elliptic2}; see e.g. \cite{ErnGuermond}. 
From \eqref{eq:elliptic1}, one can see that
\begin{align*}
 a(u - \tilde u_h,v_h) &= b(v_h,p - \tilde p_h).
\end{align*}
Using the boundedness of $a$ and $b$, and the ellipticity of $a$, we obtain 
\begin{align*}
\|u-\tilde u_h\|_{H^1}^2 
&= a(u-\tilde u_h,u-\tilde u_h)   \\
&= a(u-\tilde u_h,u-v_h) + b(\tilde u_h-v_h,p-\tilde p_h) \\
&\le \|u-\tilde u_h\|_{H^1} \|u-v_h\|_{H^1} + (\|u - \tilde u_h\|_{H^1} + \|u - v_h\|_{H^1}) \|p - \tilde p_h\|_{L^2},
\end{align*}
for all $v_h \in V_h$.
By Young's inequality and rearrangement of terms, this leads to
\begin{align*}
\|u-\tilde u_h\|_{H^1}^2 \le C (\|u-v_h\|^2_{H^1} + \|p-\tilde p_h\|_{L^2}^2).  
\end{align*}
The two estimates for the displacement error now follow from those for the pressure and standard approximation error estimates for the space $V_h$.
\end{proof}
From the estimates of Theorem~\ref{thm:semi}, we now immediately deduce the following result.
\begin{theorem}
Let Assumption~\ref{ass:main} hold and $(u,p)$ denote a sufficiently regular weak solution of the system \eqref{eq:biot1num}--\eqref{eq:biot3num} with uniformly positive smooth functions $\lambda,\mu,\kappa$, and $\Omega$ convex. Moreover, let $u_h,p_h$ denote a solution of \eqref{eq:var1h}--\eqref{eq:var2h} with $V_h$, $Q_h$ chosen as above. Then
\begin{align*}
\|u-u_h\|_{L^\infty(0,T;H^1(\Omega))} + \|p-p_h\|_{L^\infty(0,T;L^2(\Omega))} + h \|p - p_h\|_{L^2(0,T;H^1(\Omega))} \le C(u,p) h^{k+1}  
\end{align*}
with a constant $C(u,p)$ depending only on the norm of the solution. 
\end{theorem}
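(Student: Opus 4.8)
The plan is to specialise the abstract estimates of Theorem~\ref{thm:semi} to the present concrete setting, where $\|\cdot\|_\V \simeq \|\cdot\|_{H^1(\Omega)}$, $\|\cdot\|_\Q \simeq \|\cdot\|_{H^1(\Omega)}$ and $\|\cdot\|_{\Q_0} \simeq \|\cdot\|_{L^2(\Omega)}$, and then to insert the elliptic-projection bounds of the previous lemma with $s=k$. Two preparatory observations are needed. First, the divergence structure $b(v,q)=\alpha\int_\Omega \div(v)\,q$ gives $\|Bw\|_{\Q_0^*}=\alpha\|\div w\|_{L^2(\Omega)}\le C\|w\|_{H^1(\Omega)}$, so the operator term $\|B(\mathbullet u - \mathbullet{\tilde u}_h)\|_{\Q_0^*}$ appearing throughout Theorem~\ref{thm:semi} is controlled by $\|\mathbullet u - \mathbullet{\tilde u}_h\|_{H^1(\Omega)}$.

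The second observation, which is really the only point requiring care, is that the elliptic projection defined by \eqref{eq:elliptic1}--\eqref{eq:elliptic2} is a time-independent linear operator and therefore commutes with $\partial_t$. Differentiating \eqref{eq:elliptic1}--\eqref{eq:elliptic2} in time shows that $(\mathbullet{\tilde u}_h,\mathbullet{\tilde p}_h)$ is exactly the elliptic projection of $(\mathbullet u,\mathbullet p)$. Applying the displacement bound of the preceding lemma to this data (convex case, $s=k$) thus yields $\|\mathbullet u - \mathbullet{\tilde u}_h\|_{H^1(\Omega)} \le C h^{k+1}(\|\mathbullet u\|_{H^{k+2}(\Th)} + \|\mathbullet p\|_{H^{k+1}(\Th)})$, and combined with the first observation we obtain the order-$h^{k+1}$ bound $\|B(\mathbullet u - \mathbullet{\tilde u}_h)\|_{L^2(0,T;\Q_0^*)}\le C h^{k+1}$, the constant absorbing the time integrals of the stated solution norms.

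With these in hand each of the three terms follows by inserting the appropriate projection estimate into Theorem~\ref{thm:semi}. For the displacement, the first estimate combines the $O(h^{k+1})$ bound for $\|u-\tilde u_h\|_{H^1}$ (convex case, $s=k$) with the $O(h^{k+1})$ operator term, giving $\|u-u_h\|_{L^\infty(0,T;H^1)}\le C h^{k+1}$. Since Taylor--Hood elements satisfy (A3h) with $\beta_h$ bounded below uniformly in $h$, so that $\beta_h^{-1}\le C$, the third estimate combines the $O(h^{k+1})$ bound for $\|p-\tilde p_h\|_{L^2}$ with the same operator term to give $\|p-p_h\|_{L^\infty(0,T;L^2)}\le C h^{k+1}$. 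For the last term, the second estimate relies on the general-case bound $\|p-\tilde p_h\|_{H^1}\le C h^{k}\|p\|_{H^{k+1}}$, which is only of order $h^{k}$; the extra factor $h$ in the statement then restores the rate and yields $h\,\|p-p_h\|_{L^2(0,T;H^1)}\le C h^{k+1}$.

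I do not anticipate a genuine obstacle: the phrasing ``we now immediately deduce'' is accurate, and the only substantive step is the commutation of the elliptic projection with $\partial_t$ together with the bookkeeping of the highest Sobolev norms of $u$, $p$, $\mathbullet u$, $\mathbullet p$ that enter the constant $C(u,p)$. The regularity required for these norms --- essentially $u\in L^\infty(0,T;H^{k+2})$ with $\mathbullet u\in L^2(0,T;H^{k+2})$ and the analogous control on $p$ --- is precisely what is meant by a \emph{sufficiently regular} solution.
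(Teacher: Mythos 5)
Your proposal is correct and follows exactly the route the paper intends: the paper's ``proof'' is the single remark that the result follows immediately from Theorem~\ref{thm:semi} combined with the elliptic-projection lemma, and your write-up just makes explicit the same ingredients (the bound $\|Bw\|_{\Q_0^*}\le C\|w\|_{H^1(\Omega)}$, the commutation of the time-independent elliptic projection with $\partial_t$, the uniform inf-sup constant of Taylor--Hood elements, and the bookkeeping of which projection estimate --- convex-case $O(h^{k+1})$ versus general-case $O(h^k)$ --- feeds each of the three terms). No discrepancy with the paper's argument.
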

Similar estimates were obtained by Murad and Loula \cite{MuradLoula94} via different energy arguments.
Using Theorem~\ref{thm:full}, we further obtain the following fully discrete error estimates.
\begin{theorem}
Let the assumptions of the previous theorem hold and 
$(U_h,P_h)$ denote the fully discrete solution defined in Problem~\ref{prob:fullh}. Then 
\begin{align*}
&\max_{0 \le t^n \le T} \|u(t^n)-U_h^n\|_{H^1(\Omega)} + \max_{0 \le t^n \le T} \|p(t^n)-P_h^n\|_{L^2(\Omega)} 
\\&\qquad \qquad \qquad 
+ h \big(\sum_{n=1}^N \tau \|p(t^n) - P_h^n\|_{H^1(\Omega)}^2 )^{1/2}
\le C_1(u,p) h^{k+1} + C_2(u,p) \tau^{q+1}
\end{align*}
with constants $C_i(u,p)$, $i=1,2$ depending only on the norm of the solution. 
\end{theorem}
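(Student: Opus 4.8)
The plan is to combine the abstract fully-discrete estimates of Theorem~\ref{thm:full} with the concrete projection-error bounds in space and time, treating the three terms on the left-hand side in turn. Throughout I use that for the Taylor--Hood pair the discrete inf-sup constant $\beta_h$ is bounded below uniformly in $h$, so that the factor $\beta_h^{-1}$ in Theorem~\ref{thm:full} is harmless, and that here $\Q_0 = L^2(\Omega)$, $\V = H_0^1(\Omega)^d$, and $\Q = H_0^1(\Omega)$.

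First I would bound the two projection quantities appearing in Theorem~\ref{thm:full}. For $C_\tau(p) = \|p - \Pi_q^1 p\|_{L^2(0,t^n;\Q)}$ I apply the time-projection lemma with $r = q+1$ and $X = H^1(\Omega)$ on each subinterval and sum, obtaining $C_\tau(p) \le C\tau^{q+1}\|\dt^{(q+1)} p\|_{L^2(0,T;H^1)}$. For $C_h(u) = \|B(\mathbullet u - \mathbullet{\tilde u}_h)\|_{L^2(0,t^n;\Q_0^*)}$ I use that the elliptic projection \eqref{eq:elliptic1}--\eqref{eq:elliptic2} is linear and acts pointwise in $t$, hence commutes with differentiation in time, so that $\mathbullet{\tilde u}_h$ is precisely the elliptic projection of $\mathbullet u$; together with $\|Bw\|_{\Q_0^*} \le C_b\|w\|_{H^1}$ and the improved elliptic estimate with $s=k$ (applied to $\mathbullet u$) this gives $C_h(u) \le C h^{k+1}$. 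Combined with the nodal elliptic errors $\|u(t^n)-\tilde u_h(t^n)\|_{H^1} \le C h^{k+1}$ and $\|p(t^n)-\tilde p_h(t^n)\|_{L^2} \le C h^{k+1}$ (convex $\Omega$, $s=k$), the first two estimates of Theorem~\ref{thm:full} immediately yield $\max_n\|u(t^n)-U_h^n\|_{H^1} + \max_n\|p(t^n)-P_h^n\|_{L^2} \le C(h^{k+1}+\tau^{q+1})$.

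The main obstacle is the discrete term $h\big(\sum_n\tau\|p(t^n)-P_h^n\|_{H^1}^2\big)^{1/2}$, since Theorem~\ref{thm:full} only controls $\Pi_{q-1}^0(p-P_h)$ in $L^2(0,t^n;\Q)$, whereas here the full $H^1$-norm of the error is required \emph{at the nodes}. I would resolve this by exploiting the prefactor $h$. Splitting $p(t^n)-P_h^n = (p(t^n)-\tilde p_h(t^n)) + \delta P_h(t^n)$ with $\delta P_h(t^n) = \tilde p_h(t^n)-P_h^n \in Q_h$, the first part is controlled by $\|p(t^n)-\tilde p_h(t^n)\|_{H^1}\le C h^{k}$, so that $h\cdot h^k = h^{k+1}$. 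For the discrete part I use the pointwise discrete error equation \eqref{eq:fullerrh1} and the discrete inf-sup condition (A3h) to obtain the nodal bound $\beta_h\|\delta P_h(t^n)\|_{L^2}\le\|\delta U_h(t^n)\|_{H^1}$, followed by the spatial inverse inequality $\|\delta P_h(t^n)\|_{H^1}\le C h^{-1}\|\delta P_h(t^n)\|_{L^2}$ on $Q_h$; the external factor $h$ then cancels the $h^{-1}$. Finally, the nodal displacement error is bounded, uniformly in $n$, by the discrete energy--dissipation estimate, which gives $\|\delta U_h(t^n)\|_{H^1}\le C(C_h(u)+C_\tau(p))\le C(h^{k+1}+\tau^{q+1})$.

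Collecting these, each summand satisfies $h\|p(t^n)-P_h^n\|_{H^1}\le C(h^{k+1}+\tau^{q+1})$ uniformly in $n$, and since $\sum_n\tau = T$ the $\tau$-weighted $\ell^2$-sum contributes only a factor $\sqrt T$. The only remaining point is the mixed term $h\,\tau^{q+1}$, which is absorbed using that $h$ is bounded. Renaming the solution-dependent constants as $C_1(u,p)$ for the spatial and $C_2(u,p)$ for the temporal contributions then yields the stated estimate. The genuinely delicate step is this last term, where one must combine the nodal inf-sup bound, the spatial inverse inequality, and the $h$-prefactor to upgrade the available $L^2$-type control of the pressure into the required $h$-weighted $H^1$ bound at optimal order.
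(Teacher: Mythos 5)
Your proposal is correct, and for the first two terms it follows exactly the route the paper intends: the paper gives no explicit proof of this theorem beyond the remark that it follows from Theorem~\ref{thm:full}, combined with the time-projection lemma and the elliptic-projection lemma, and that is precisely how you bound the nodal displacement error and the nodal $L^2$ pressure error (note only that the latter uses the \emph{third} estimate of Theorem~\ref{thm:full}, the one requiring (A3h), so "the first two estimates" is a slight mislabel; the content is right since you invoke $\beta_h^{-1}$). The genuine added value of your write-up is the treatment of the $h$-weighted $H^1$ pressure term. Theorem~\ref{thm:full} controls $\|\Pi_{q-1}^0(p-P_h)\|_{L^2(0,t^n;\Q)}$, which is \emph{not} the discrete nodal norm $\big(\sum_n \tau \|p(t^n)-P_h^n\|_{H^1}^2\big)^{1/2}$ appearing in the statement, and the paper is silent on this discrepancy. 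Your argument --- split off the elliptic projection error (of order $h^k$ in $H^1$, compensated by the prefactor $h$), bound $\delta P_h(t^n)$ in $L^2$ through the pointwise error equation \eqref{eq:fullerrh1}, the discrete inf-sup condition (A3h), and the energy bound for $\delta U_h(t^n)$, then upgrade to $H^1$ by a spatial inverse inequality whose $h^{-1}$ is cancelled by the prefactor --- is sound, optimal in order, and uses only ingredients already present in the paper; it fills a real gap rather than reproducing a written argument. Two minor caveats: the global inverse inequality $\|q_h\|_{H^1(\Omega)} \le C h^{-1}\|q_h\|_{L^2(\Omega)}$ requires quasi-uniformity of the mesh family, whereas the paper assumes only shape-regularity (its numerical meshes are uniform, so this is a mild additional hypothesis that should be stated); and your use of the fact that the elliptic projection commutes with $\partial_t$, so that the improved spatial estimates apply to $\mathbullet u - \mathbullet{\tilde u}_h$, implicitly requires the corresponding spatial regularity of $\mathbullet u$ and $\mathbullet p$, which is covered by the theorem's ``sufficiently regular'' hypothesis but worth making explicit in the constants $C_1(u,p)$, $C_2(u,p)$.
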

Let us note that only first order estimates with respect to  the time discretization were obtained in \cite{KanschatRiviere18,MuradLoula94}, and the results of the previous theorem seem to be the first rigorous high-order estimates in space and time.

\section{Numerical tests} \label{sec:num}

We now illustrate our theoretical findings by numerical results for a test problem which was utilized in \cite{KanschatRiviere18}. 
We consider the Biot-system \eqref{eq:biot1}--\eqref{eq:biot2} with constant parameters $\alpha=\mu=\lambda=\kappa=1$ on the two-dimensional unit square $\Omega=(0,1)^2$. 
As in \cite{KanschatRiviere18}, the exact solution shall be given by   
\begin{align*}
  p(x,y,t)&=\psi(t)\phi(x,y), \\
  u(x,y,t)&=\frac{\psi(t)}{8 \pi^2}\nabla\phi(x,y),
\end{align*}
with $\phi(x,y)=\sin(2\pi x) \sin(2 \pi y)$ and time dependent function
\begin{align*}
\psi(t)=\frac{1}{64\pi^4+4\pi^2}(8\pi^2\sin(2\pi t)-2\pi \cos(2 \pi t)+2 \pi e^{-8\pi^2 t}). 
\end{align*}
This solution satisfies the somewhat non-standard boundary conditions
\begin{align*}
n \times u = 0, \qquad 
\partial_n (u \cdot n) = 0, \qquad 
p=0 \qquad \text{on } \partial \Omega.
\end{align*}
One can verify that the first two conditions amount to mixed Dirichlet and Neumann boundary conditions for any of the two components of the deformation $u$. 
The problem data $f$ and $g$ are determined by inserting the exact solution into the Biot equations. 
Let us note that our abstract convergence results immediately apply to this problem. Moreover, since the solution is smooth, we expect to observe the full convergence rates predicted by our theoretical results.

\subsection{Remarks on the implementation}

As indicated in Section~\ref{sec:timerelation}, the proposed time discretization strategy can be interpreted as a variant of the Lobatto-IIIA method with $s=q+1$ stages applied to the integration of the modified differential-algebraic system \eqref{eq:daesys2} which arises after differentiation of the algebraic equation. 
For the lowest order approximation $q=1$, the resulting scheme can be written as 
\begin{align*}
\begin{pmatrix} A & -B^\top \\ B & 0\end{pmatrix} 
\begin{pmatrix} \mathbullet u^{n+1/2} \\ \mathbullet p^{n+1/2} \end{pmatrix}
+ 
\begin{pmatrix} 0 & 0 \\ 0 & K\end{pmatrix} 
\begin{pmatrix} u^{n+1/2} \\ p^{n+1/2} \end{pmatrix}
=
\begin{pmatrix} \tilde{\mathbullet f}^{n+1/2} \\ \tilde g^{n+1/2} \end{pmatrix}
\end{align*}
where $\mathbullet a^{n+1/2} = \frac{1}{\tau} (a^{n+1}-a^n)$, $a^{n+1/2} = \frac{1}{2}(a^{n+1}+a^n)$, and $\tilde a^{n+1/2} = \frac{1}{\tau}\int_{t^n}^{t^{n+1}} a(t) dt$.
Apart from the special form of the right hand sides, this corresponds to the 
Crank-Nicolson method, i.e., the Lobatto-IIIA method with $s=2$ stages. 
Due to the stability of problem \eqref{eq:daesys2} with respect to the data,
one can use numerical quadrature for the right hand sides without disturbing the convergence rate. 
In our numerical tests, we will therefore use the Lobatto-IIIA method with $s=q+1$ stages for the time integration of the modified system \eqref{eq:daesys} instead of the Petrov-Galerkin approximation with order $q$.
For discretization of the domain $\Omega$, we utilize uniform triangulations obtained by regular refinements of an initial mesh consisting of only two triangles. Taylor-Hood finite elements $P_{k+1}-P_k$ of order $k$ are utilized for the spatial approximation of the funtions $u$ and $p$, as discussed in Section~\ref{sec:biot}.

\subsection{Results}

In our first test, we consider the approximation by $P_{2}-P_1$ elements in space and the Crank-Nicolson method in time. This corresponds to polynomial orders $k=q=1$ in the theorems presented in the previous section.
The results of our computations are summarized in Table~\ref{tab:1},
where we display relative errors $\|y-y_h\|_{rel} = \|y-y_h\|/\|y\|$ and approximate norms in time by evaluations at the discrete time points $t^n=n \tau$. 

\begin{table}[hbt!]
\begin{center}
\bgroup
\footnotesize
\def\arraystretch{1.1}
\begin{tabular}{c||c|c||c|c||c|c}
 $h$ & $\|u-u_h\|_{L_{\tau,rel}^\infty(H^1)}$  & eoc 
     & $\|p-p_h\|_{L_{\tau,rel}^\infty(L^2)}$  & eoc 
     & $\|p-p_h\|_{L_{\tau,rel}^2(H^1)}$       & eoc \\[3pt]
\hline  
1/8   & 1.5374e-01 & ---  & 2.5105e-01 & ---  & 3.8562e-01 & ---  \\
1/16  & 4.2186e-02 & 1.87 & 7.1120e-02 & 1.82 & 1.9495e-01 & 0.98 \\
1/32  & 1.0808e-02 & 1.96 & 1.8365e-02 & 1.95 & 9.7553e-02 & 1.00 \\
1/64  & 2.7189e-03 & 1.99 & 4.6288e-03 & 1.99 & 4.8779e-02 & 1.00 \\                          
\end{tabular}
\egroup
\vskip1em
\caption{Relative errors for approximation with $P_2$--$P_1$ finite elements in space and the Crank-Nicolson method in time with $\tau=0.1 h$.\label{tab:1}}  
\end{center}
\end{table}

As predicted by the theorem of Section~\ref{sec:biot}, we can observe second order convergence in the $H^1$-norm for the displacement and the $L^2$-norm for the pressure when choosing the time step $\tau = c h$ proportional to the mesh size. Due to the lower polynomial order of the approximation, the $H^1$-error in the pressure is limited to one. 

\bigskip 

In order to illustrate the possibility for higher-order approximations, we consider in a second test the spatial approximation by $P_4$-$P_3$ elements   
together with time discretization via the Lobatto-IIIA method with $s=3$ stages. 
We again choose the time step $\tau=0.1 h$ proportional to the mesh size. 
The corresponding results are depicted in Table \ref{tab:2}. 
\begin{table}[hbt!]
\begin{center}
\bgroup
\def\arraystretch{1.1}
\small
\begin{tabular}{c||c|c||c|c||c|c}
 $h$ & $\|u-u_h\|_{L_{rel}^\infty(H^1)}$  & eoc 
     & $\|p-p_h\|_{L_{rel}^\infty(L^2)}$  & eoc 
     & $\|p-p_h\|_{L_{rel}^2(H^1)}$       & eoc \\[3pt]
\hline  
1/8  &  7.7344e-04 & ---  & 6.8360e-04 & ---  & 5.8759e-03 & ---  \\
1/16 &  4.9170e-05 & 3.98 & 4.1778e-05 & 4.03 & 7.3638e-04 & 3.00 \\
1/32 &  3.0855e-06 & 3.99 & 2.5781e-06 & 4.02 & 9.1886e-05 & 3.00 \\
1/64 &  1.9299e-07 & 4.00 & 1.6018e-07 & 4.01 & 1.1470e-05 & 3.00 
\end{tabular}
\egroup
\vskip1em
\caption{Relative errors for $P_4$--$P_3$ finite elements in space and the Lobatto-IIIA method with $s=3$ stages in time and $\tau=0.1 h$.\label{tab:2}}  
\end{center}
\end{table}

The approximation utilized for our computations corresponds the setting discussed in Section~\ref{sec:biot} with polynomial orders $k=3$ and $q=2$. 
The convergence rate for the $H^1$-error in the pressure is explained by our theoretical results. For the $H^1$-error in the displacement and $L^2$-error in the pressure, we proved error bounds of the form $O(h^4 + \tau^3)$. The results obtained in our computations thus seem to illustrate super-convergence $O(\tau^4)$ with respect to the time discretization at discrete time points $t^n=n \tau$, which is the rate that can be expected for the Lobatt-IIIA methods with $s=3$ stages or the Petrov-Galerkin approximation with order $q=2$, when applied to the solution of ordinary differential equations; see \cite{Akrivis11} for details. A rigorous proof of this super-convergence in the context of space-time discretization of the Biot system is still open.

\section{Discussion}

We considered the systematic  approximation of a class of abstract evolution problems by Galerkin methods in space and time. This class of problems covers the quasistatic Biot-system as a special case, which allowed us to derive convergence rates for high-order approximations by inf-sup stable finite elements in space and variants of Runge-Kutta methods in time applied to a certain reformulation of the problem. 
The predicted rates were confirmed in numerical tests and super-convergence with respect to the time discretization could be observed at discrete time points. A rigorous analysis of this fact is a possible topic for future research. 
In this paper, we considered abstract evolution problems whose strucure was motivated by the two-field formulation of the Biot system. The main arguments of our analysis, however, seem applicable also to other formulations of the problem and also to other time-discretization schemes, e.g., discontinuous-Galerkin methods or Runge-Kutta methods of Radau-type,
which have stronger stability properties.

{\small 
\section*{Acknowledgements}
This work was supported by the German Research Foundation (DFG) via grants TRR~146 C3, TRR~154 C4, Eg-331/1-1. The second author was additionally supported by the ``Center for Computational Engineering'' and the ``Future-Talents'' program at TU Darmstadt. 
The authors would further like to thank Prof. Johannes Kraus from University Duisburg-Essen for interesting discussions on quasistatic poroelasticity which initiated this research. 
}


\end{document}